\newtheorem{theorem}{Theorem}[section]
\newtheorem{lemma}[theorem]{Lemma}
 \newtheorem{main}{Theorem}
\theoremstyle{definition}
\newtheorem{example}[theorem]{Example}
\theoremstyle{remark}
\newtheorem{remark}[theorem]{Remark}
\numberwithin{equation}{section}
\newcommand{\N}{\ensuremath{\mathbb{N}}}
\newcommand{\B}{\ensuremath{\mathcal{B}}}
\newcommand{\U}{{  {\mathcal{U}}}}
\newcommand{\set}[1]{\left\{#1\right\}}
\newcommand{\La}{\Lambda}
\newcommand{\f}{\infty}
\newcommand{\al}{\alpha}
\newcommand{\ra}{\rightarrow}
\newcommand{\lle}{\preccurlyeq}
\newcommand{\lge}{\succcurlyeq}
\begin{document}

\title{Multiple expansions of real numbers  with  digits set $\set{0,1,q}$}

\author[K.Dajani]{Karma Dajani}
\address[K. Dajani]{Department of Mathematics, Utrecht University,   Budapestlaan 6, P.O. Box 80.000, 3508 TA Utrecht, The Netherlands}
\email{k.dajani1@uu.nl}

\author[K.Jiang]{Kan Jiang}
\address[K. Jiang]{Department of Mathematics, Utrecht University,   Budapestlaan 6, P.O. Box 80.000, 3508 TA Utrecht, The Netherlands}
\curraddr{Department of Mathematics, Ningbo University, Ningbo, Zhejiang, People's Republic of China}
\email{kanjiangbunnik@yahoo.com}

\author[D.Kong]{Derong Kong}
\address[D. Kong]{Mathematical Institute, University of Leiden, PO Box 9512, 2300 RA Leiden, The Netherlands}
\curraddr{College of Mathematics and Statistics, Chongqing University, 401331, Chongqing, P.R.China}
\email{derongkong@126.com}

\author[W.Li]{Wenxia Li}
\address[W. Li]{School of Mathematical Sciences, Shanghai Key Laboratory of PMMP, East China Normal University, Shanghai 200062,
People's Republic of China}
\email{wxli@math.ecnu.edu.cn}


\dedicatory{ Dedicated to Michel Dekking on the occasion of his 70$^{th}$ birthday}


\subjclass[2010]{Primary: 11A63, Secondary: 10K50, 11K55,  37B10}

\begin{abstract}
For $q>1$  we consider expansions in base $q$ with digits set $\set{0,1,q}$. Let $\U_q$ be the set of points which have a unique $q$-expansion. For   $k=2, 3,\cdots,\aleph_0$ let $\B_k$ be the set of bases $q>1$ for which there exists $x$ having precisely $k$ different  $q$-expansions, and  for $q\in \B_k$ let $\U_q^{(k)}$ be the set of all such  $x$'s which have exactly $k$ different $q$-expansions. In this paper we show that 
\[
\B_{\aleph_0}=[2,\f)\quad\textrm{and}\quad
\B_k=(q_c,\f)\quad \textrm{for any}\quad k\ge 2,\]
 where 
$q_c\approx 2.32472$ is the appropriate root of
$x^3-3x^2+2x-1=0$.
 Moreover,  we show that for any  integer $k\ge 2$ and any  $q\in\B_{k}$ the Hausdorff dimensions of $\U_q^{(k)}$  and $\U_q$ are   the same, i.e.,
\[
\dim_H\U_q^{(k)}=\dim_H\U_q\quad\textrm{for any}\quad k\ge 2.
\]
Finally, we conclude that the set of points having a continuum of $q$-expansions has full Hausdorff dimension.
\end{abstract}

\keywords{unique expansion, multiple expansion, countable expansion, Hausdorff dimension}
\maketitle

\section{Introduction}\label{sec: Introduction}

Expansions in non-integer bases were pioneered by R\'{e}nyi \cite{Renyi_1957} and Parry \cite{Parry_1960}. Unlike integer base expansions, for a given $\beta\in(1, 2)$, it is well-known  that typically a real number $x\in I_\beta:=[0, 1/(\beta-1)]$ has a  continuum of $\beta$-expansions with digits set $\set{0, 1}$ (cf.~\cite{Dajani_DeVries_2007, Sidorov_2003}), i.e., for Lebesuge almost every $x\in I_\beta$ there exist a continuum of zero-one sequences $(x_i)$ such that $x=\sum_{i=1}^\f x_i/\beta^i$.   However, there still exist $x\in I_\beta$ having a unique $\beta$-expansion  (cf.~\cite{Erdos_Joo_Komornik_1990, Glendinning_Sidorov_2001, Kong_Li_Dekking_2010}). Denote by $\U_\beta$ the set of all  $x\in I_\beta$ with a unique $\beta$-expansion.  De Vries and Komornik \cite{DeVries_Komornik_2008}
investigated the topological properties of $\U_\beta$. Komornik et al. \cite{Komornik_Kong_Li_2015_1} considered the Hausdorff dimension of $\U_\beta$, and concluded that the dimension function $\beta\mapsto \dim_H\U_\beta$ behaves like a Devil's staircase. Interestingly, for any $k=2,3,\cdots$ or $\aleph_0$   Erd\H{o}s et al. \cite{Erdos_Horvath_Joo_1991, Erdos_Joo_1992} showed that there exist  $\beta\in(1,2)$ and  $x\in I_\beta$ such that $x$ has precisely $k$ different $\beta$-expansions. 
For more information on expansions in  non-integer bases we refer to \cite{Baker_2015, Sidorov_2009, Zou_Kong_2015}, and the surveys \cite{deVries-Komornik-2016, Komornik_2011, Sidorov_2003_survey}.

In this paper we consider expansions with digits set $\set{0,1,q}$. 
Given $q>1$, the infinite sequence $(d_i)$ is called a  \emph{$q$-expansion} of $x$, if 
\[
x=((d_i))_q:=\sum_{i=1}^\f\frac{d_i}{q^i},\quad d_i\in\set{0,1,q}\textrm{ for all } i\ge 1.
\]
We emphasize  that   the \emph{digits set} $\set{0,1,q}$  also depends on the base $q$.

For $q>1$ let $E_q$ be the set of points which have a $q$-expansion. Then  $E_q$ is  the attractor of the \emph{iterated function system}  (IFS)
\[
\phi_d(x)=\frac{x+d}{q},\quad d\in\set{0,1,q}.
\]
So, $E_q$ is the non-empty compact set satisfying $E_q=\bigcup_{d\in\set{0,1,q}}\phi_d(E_q)$ (cf.~\cite{Falconer_1990}).  Observe that $\phi_0(E_q)\cap \phi_1(E_q)\ne\emptyset$ for any $q>1$. Then 
  $E_q$ is a  \emph{self-similar set with overlaps}.     Ngai and Wang \cite{Ngai_Wang_2001} gave    the Hausdorff dimension of $E_q$:
\begin{equation}\label{eq:11}
\dim_H E_q=\frac{\log q^*}{\log q}\quad\textrm{for any}\quad q>q^*,
\end{equation}
where $q^*=(3+\sqrt{5})/2$.  Yao and Li \cite{Yao_Li_2015} considered all possible IFSs generating  the set $E_q$.
Zou et al.~\cite{Zou_Lu_Li_2012} considered the set of points in $E_q$ which have a unique $q$-expansion.  In this paper, we investigate    the set of points in $E_q$ having multiple $q$-expansions.

For $k= 1,2,\cdots, \aleph_0$ or $2^{\aleph_0}$, let 
\[
\B_k:=\set{q\in(1,\f): \exists~x\in E_q\textrm{ with precisely }k\textrm{ different }q\textrm{-expansions}}.
\]
  Accordingly, for $q\in \B_k$ let 
\[
\U_q^{(k)}:=\set{x\in E_q: x~\textrm{has precisely}~k~\textrm{different}~q\textrm{-expansions}}.
\]
For simplicity, we write  $\U_q:=\U_q^{(1)}$ for the set of $x\in E_q$ having a unique $q$-expansion, and denote by   $ \U'_q$  the set of all $q$-expansions corresponding to elements of $\U_q$.  

In this paper we will describe the sizes of the sets $\B_k$ and $\U_q^{(k)}$. Our first result is on 
 the set $\B_k$ for $k= 1,2,\cdots,\aleph_0$ or $2^{\aleph_0}$. Clearly, when $k=1$ we have $\B_1=(1,\f)$, since $0$ always has a unique $q$-expansion for any $q>1$. 
When $k= 2,3,\cdots,\aleph_0$ or $2^{\aleph_0}$ we have the following  
\begin{main}
\label{th:11}
Let  $q_c\approx 2.32472$ be the  appropriate root of 
$x^3-3x^2+2x-1=0.$
Then 
\[\B_{2^{\aleph_0}}=(1,\f),\quad \B_{\aleph_0}=[2,\f),\quad \B_k=(q_c,\f)\quad \textrm{for any}\quad k\ge 2.\]
\end{main}

By Theorem \ref{th:11} it follows that for $q\in[2,q_c]$,  any $x\in E_q$ can only have  a unique $q$-expansion, countably infinitely many $q$-expansions, or a continuum of $q$-expansions.  

When $k=1$, the following theorem for the \emph{univoque set} $\U_q=\U_q^{(1)}$ was proven in \cite{Zou_Lu_Li_2012}.
\begin{theorem}\label{th:12}
\mbox{}

\begin{enumerate}[{\rm(i)}]
\item  If $q\in(1, q_c]$, then $\U_q=\set{0,q/(q-1)}$.
\item  If $q\in(q_c, q^*)$, then $\U_q$ contains a continuum of points.
\item If $q\in[q^*,\f)$, then $\dim_H\U_q=\log q_c/\log q$. 
\end{enumerate}
\end{theorem}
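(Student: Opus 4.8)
My plan is to translate everything into symbolic dynamics and then read off the three regimes from a single uniqueness criterion. Write $a:=q/(q-1)$ for the largest point of $E_q$, and for a sequence $\mathbf d=(d_i)\in\set{0,1,q}^{\mathbb N}$ put $v_n:=\sum_{i\ge 1}d_{n+i-1}q^{-i}\in[0,a]$ for the value of its tail from position $n$, so that $v_n=(d_n+v_{n+1})/q$. Computing, from $v_{n+1}=qv_n-d_n\in[0,a]$, the set of digits admissible at a given value gives $d=0$ on $[0,1/(q-1)]$, $d=1$ on $[1/q,(2q-1)/(q(q-1))]$ and $d=q$ on $[1,a]$, so the only overlaps are $S_{01}=[1/q,1/(q-1)]$ (always present) and $S_{1q}=[1,(2q-1)/(q(q-1))]$, which is nonempty exactly when $q^2-3q+1\le 0$, i.e.\ $q\le q^*$. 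Since the same inequality is equivalent to the three images $\phi_d([0,a])$ covering $[0,a]$, one also gets $E_q=[0,a]$ for $q\le q^*$ and a Cantor set for $q>q^*$; I would record this dichotomy first. As a point of $E_q$ lies in $\U_q$ precisely when its coding orbit never meets an overlap region with both branches landing in $E_q$, for $q\le q^*$ the criterion becomes: $\mathbf d$ is the unique expansion of $v_1$ iff for every $n$ one has $v_{n+1}<1$ when $d_n=0$, $\;1/(q-1)<v_{n+1}<q-1$ when $d_n=1$, and $v_{n+1}>\beta:=(3q-1-q^2)/(q-1)$ when $d_n=q$.

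For $q\ge q^*$ the region $S_{1q}$ is empty (degenerate at $q^*$) and $a\le q-1$, so the upper bound on $v_{n+1}$ at a digit $1$ is automatic and only $S_{01}$ can branch; using the carry identity $0q\mathbf w=10\mathbf w$ (valid for all $q$, since both codings have value in $[0,a]$) I would prove that $\mathbf d$ codes a point of $\U_q$ iff it contains neither the word $0q$ nor the word $10$, the only sequences lost being the countably many with tail $1^\f$, which do not affect the Hausdorff dimension. The closure of the set of such codings is therefore the subshift of finite type on $\set{0,1,q}$ with transition matrix
\[
M=\begin{pmatrix}1&1&0\\0&1&1\\1&1&1\end{pmatrix},
\]
whose characteristic polynomial is exactly $\lambda^3-3\lambda^2+2\lambda-1$; hence its spectral radius is $q_c$ and its topological entropy is $\log q_c$. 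To pass to the real line I would push the Parry measure of $M$ forward under the coding map $\pi$, whose restriction to the unique-expansion codings is injective (this is the meaning of uniqueness), and combine the upper bound from covering by generation-$n$ cylinders (diameter $\asymp q^{-n}$, number $\asymp q_c^{\,n}$) with a mass-distribution lower bound to obtain $\dim_H\U_q=\log q_c/\log q$; the tangential case $q=q^*$ I would settle either by the same covering estimate or by monotonicity in $q$.

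For $q<q^*$ both overlaps are active and the criterion says exactly that the coding orbit survives in $[0,a]$ under the expanding interval map with holes $S_{01}\cup S_{1q}$. The decisive feature is the window $1/(q-1)<v_{n+1}<q-1$ forced at every digit $1$: its lower end is the obstruction from $S_{01}$, its upper end that from $S_{1q}$, and it is nonempty only for $q>2$. I would then analyse how long a block of $1$'s can be sustained: a run $q1^{j}$ has, at its final $1$, successor value $\pi(\overline{q1^{j}})\to 1+\tfrac{1}{q(q-1)}$ as $j\to\f$, and requiring this to stay below $q-1$ gives $1+\tfrac{1}{q(q-1)}<q-1$, i.e.\ $q(q-1)(q-2)>1$, i.e.\ $q^3-3q^2+2q-1>0$, i.e.\ $q>q_c$. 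Thus for every $q>q_c$ a suitable $(q1^{j})^\f$ already lies in $\U_q$, and taking two admissible run-lengths $j,j'$ large enough and forming all concatenations of the blocks $q1^{j},q1^{j'}$ embeds $\set{0,1}^{\mathbb N}$ into $\U_q$ (longer runs only push the tail values toward $1/(q-1)$, away from the offending thresholds), proving the continuum in Part~(2). For $q\le q_c$ I would show the survivor set of the holed map is just the two fixed points: any nontrivial unique expansion must contain a digit $1$, and the two-sided window together with the post-$q$ condition $v_{n+1}>\beta$ becomes infeasible for an infinite sequence once $q(q-1)(q-2)\le 1$, forcing $\mathbf d\in\set{0^\f,q^\f}$ and hence $\U_q=\set{0,a}$.

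The main obstacle is Part~(1)–(2): because the alphabet $\set{0,1,q}$ is asymmetric (the gaps $1$ and $q-1$ differ), the two ends of the window at a digit $1$ obey different, genuinely $q$-dependent rules, and turning ``a run of $1$'s cannot be completed to an infinite admissible sequence'' into a clean argument for all $q\le q_c$ requires controlling the cascade of the conditions $v_{n+1}>\beta$ after every $q$ simultaneously with the upper window bound—this is where the exact value $q_c$ must be squeezed out as the root of $q^3-3q^2+2q-1$. A secondary difficulty is the dimension transfer in Part~(3) at and just above $q=q^*$: there $\phi_0$ and $\phi_1$ still overlap, so $E_q$ is only of finite type rather than strongly separated, and the injectivity of $\pi$ on $\U_q$ must be exploited carefully to push the Parry measure down to the correct local dimension.
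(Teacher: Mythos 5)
First, a point of reference: the paper does not prove this theorem at all --- it is quoted from \cite{Zou_Lu_Li_2012} --- so your attempt can only be measured against the machinery the present paper imports from that source (Lemmas \ref{lem:21}--\ref{lem:23}) and reuses in Sections \ref{sec:proof of th 11}--\ref{sec:proof of th 13}. Measured that way, your framework is exactly the right one: your value windows ($v_{n+1}<1$ after a $0$, $1/(q-1)<v_{n+1}<q-1$ after a $1$, $v_{n+1}>\beta:=(3q-1-q^2)/(q-1)$ after a $q$) are the value-side form of the lexicographic conditions in Lemmas \ref{lem:21} and \ref{lem:22}; your matrix $M$ is precisely the paper's matrix $A$ in (\ref{eq:42}), whose characteristic polynomial is indeed $\lambda^3-3\lambda^2+2\lambda-1$, so the entropy $\log q_c$ is correct; and your two-block construction for $q\in(q_c,q^*)$ is the same device as the paper's $\Delta_m'=\prod_{i=1}^\f\set{q1^{m+1},1^{m+2}}$ in Lemma \ref{lem:41}, modulo easy checks at the $q$'s and intermediate $1$'s (the inequality $\beta<1/(q-1)$, valid for $q>2$, handles the former). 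The soft spot you flag in the third bullet --- pushing the Parry measure down to a dimension lower bound despite residual overlaps --- is real but standard: it is exactly what the Mauldin--Williams graph-directed open-set-condition theorem, invoked in the paper via \cite[Theorem 3.4]{Zou_Lu_Li_2012}, takes care of.

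The genuine gap is the first bullet on $(2,q_c]$. You reduce it to ``the cascade of window conditions is infeasible'' and then explicitly defer it as the main obstacle; but this bullet is where $q_c$ is actually produced, so as written the statement is asserted rather than proved. The irony is that your own windows close the cascade after two letters. Suppose $q\in(2,q_c]$ and $(d_i)\in\U_q'$ contains a $1$. Since $(10\mathbf{w})_q=(0q\mathbf{w})_q$ for every tail $\mathbf{w}$, the word $10$ never occurs in a unique expansion, and since $(1^\f)_q=(0q^\f)_q$, a run of $1$'s cannot be infinite; hence some $1$ is followed by $q$, say $d_m=1$, $d_{m+1}=q$. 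The upper window at $d_m$ gives $v_{m+1}=1+v_{m+2}/q<q-1$, i.e.\ $v_{m+2}<q(q-2)$, while the condition at $d_{m+1}$ gives $v_{m+2}>\beta$. Now: (i) if $d_{m+2}=q$ then $v_{m+2}\ge 1>q(q-2)$, impossible because $q\le q_c<1+\sqrt{2}$; (ii) if $d_{m+2}=1$ then its own lower window forces $v_{m+2}=(1+v_{m+3})/q>\bigl(1+\tfrac{1}{q-1}\bigr)/q=\tfrac{1}{q-1}\ge q(q-2)$, impossible, since $\tfrac{1}{q-1}\ge q(q-2)$ is exactly $q(q-1)(q-2)\le 1$, i.e.\ $q\le q_c$; (iii) if $d_{m+2}=0$ then its condition forces $v_{m+3}<1$, while $v_{m+2}=v_{m+3}/q>\beta$ forces $v_{m+3}>q\beta\ge 1$, impossible, since $q\beta\ge 1$ is again equivalent to $q^3-3q^2+2q-1\le 0$. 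So no digit $1$ occurs; a sequence over $\set{0,q}$ avoiding the (always forbidden) word $0q$ is $0^\f$, $q^\f$ or $q^k0^\f$, and $q^k0^\f\notin\U_q'$ because $(q0^\f)_q=1$ also has an expansion beginning with digit $1$ (as $q-1\le q/(q-1)$ for $q\le q^*$). Hence $\U_q=\set{0,q/(q-1)}$, with the case $q\in(1,2]$ already settled by your observation that the $1$-window is then empty. So the missing argument was within reach of your formalism, but it had to be carried out: it is the crux of the first bullet, not a routine verification.
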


Our second result complements Theorem \ref{th:12}, and shows that there is no difference between the Hausdorff dimensions of   $\U_q^{(k)}$  and $\U_q$.
\begin{main}\label{th:13}\mbox{}

\begin{enumerate}[{\rm(i)}]
\item
 $\dim_H\U_{q}>0$ if and only if $q>q_c$.
 
 \item  For any integer  $k\ge 2$ and any $q\in\B_{k}$ we have 
$
\dim_H\U_q^{(k)}=\dim_H\U_q.
$
 \end{enumerate}
\end{main}

As a result of Theorem \ref{th:13} it follows that $q_c$ is indeed the \emph{critical base},  in the sense that $\U_q^{(k)}$ has positive Hausdorff dimension if $q>q_c$, while $\U_q^{(k)}$ has zero Hausdorff dimension if $q\le q_c$. In fact, by Theorems \ref{th:11} and \ref{th:12} (i) it follows that for $q\le q_c$ the set $\U_q=\set{0, q/(q-1)}$ and $\U_q^{(k)}=\emptyset$   for any integer $k\ge 2$.

Our final result focuses on the sizes of  $\U_q^{(\aleph_0)}$ and $\U_q^{(2^{\aleph_0})}$.
\begin{main}\label{th:14}\mbox{}

\begin{enumerate}[{\rm(i)}]
\item
 Let  $q\in\B_{\aleph_0}\setminus(q_c, q^*)$. Then $\U_q^{(\aleph_0)}$ is countably infinite.

\item For any  $q>1$ we have
$
\dim_H\U_q^{(2^{\aleph_0})}=\dim_H E_q.
$
\end{enumerate}
 \end{main}
 
\begin{remark}\label{re:16}
In Lemma \ref{lem:55} we prove a stronger result of Theorem \ref{th:14} (ii), and show  that  the Hausdorff measures of $\U_q^{(2^{\aleph_0})}$ and $E_q$ are the same for any $q>1$, i.e., 
\[
\mathcal{H}^s(\U_q^{(2^{\aleph_0})})=\mathcal{H}^s(E_q)\in(0, \f),
\]
where $s=\dim_H E_q$.
\end{remark}

The rest of the paper is arranged as follows. In Section \ref{sec:unique expansions} we recall some properties of unique $q$-expansions. The proof of Theorem \ref{th:11} for the sets $\B_k$ will be presented in Section \ref{sec:proof of th 11}, and the proofs of Theorems \ref{th:13} and \ref{th:14} for the sets $\U_q^{(k)}$ will be given in Sections \ref{sec:proof of th 13} and \ref{sec:proof of th 14}, respectively. Finally, in Section \ref{sec:remarks} we give some examples and end the paper with some questions.

\section{unique expansions}\label{sec:unique expansions}
In this section we recall some properties of the univoque set $\U_q$ from \cite{Zou_Lu_Li_2012}. 
Recall that  
\begin{equation}\label{eq:qc-q*}
q_c\approx 2.32472\quad\textrm{and}\quad q^*=\frac{3+\sqrt{5}}{2}\approx 2.61803,
\end{equation}
where $q_c$ is the appropriate root of the equation $x^3-3 x^2+2x-1=0$. Note that   for $q\in(1,q^*]$ the attractor $E_q=[0, q/(q-1)]$ is an interval. However, for $q>q^*$ the attractor $E_q$ is a {Cantor set} which contains neither interior nor isolated points. 

Given $q>1$, let $\set{0,1,q}^\mathbb N$ be the set of all infinite sequences $(d_i)$ over the alphabet $\set{0,1,q}$. By a word $\mathbf c$ we mean a finite string of digits $\mathbf c=c_1\ldots c_n$ with each digit $c_i\in\set{0,1,q}$. For two words $\mathbf c=c_1\ldots c_m$ and $\mathbf d=d_1\ldots d_n$, we denote by $\mathbf c\mathbf d=c_1\ldots c_md_1\ldots d_n$ their concatenation. For a positive integer $k$ we write $\mathbf c^k=\mathbf c\cdots \mathbf c$ for the $k$-fold concatenation of $\mathbf c$ with itself. Furthermore, we write $\mathbf c^\f=\mathbf c\mathbf c\cdots$ the infinite periodic sequence with periodic block $\mathbf c$. Throughout the paper we will use lexicographical ordering $\prec, \lle, \succ$ and $\lge$ between sequences. More precisely, for two sequences $(c_i), (d_i)\in\set{0,1,q}^{\mathbb N}$ we say $(c_i)\prec (d_i)$ or $(d_i)\succ (c_i)$ if there exists an integer $n\ge 1$ such that $c_1\ldots c_{n-1}=d_1\ldots d_{n-1}$ and $c_n<d_n$. Furthermore, we say $(c_i)\lle (d_i)$ if $(c_i)\prec (d_i)$ or $(c_i)=(d_i)$. 

Recall that $\U_q$ is the  set of points in $E_q$ with a unique $q$-expansion, and $\U_q'$ is the set of corresponding $q$-expansions. Then  
\[\U_q'=\set{(d_i)\in\set{0,1,q}^\N:((d_i))_q\in\U_q}.\]
The following lexicographical characterization of  $\U'_q$ for $q>q^*$ was established in \cite[Lemma 3.1]{Zou_Lu_Li_2012}. 
\begin{lemma}\label{lem:21}
Let $q>q^*$. Then $(d_i)\in\U_q'$ if and only if 
\[
\left\{
\begin{array}{lll}
(d_{n+i})\prec q 0^\f&\quad\textrm{if}& d_n=0,\\
(d_{n+i})\succ1^\f &\quad\textrm{if}& d_n=1.\\
\end{array}
\right.
\]
\end{lemma}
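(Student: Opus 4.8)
The plan is to translate the purely symbolic statement into a geometric condition on the orbit of $x$ under the maps $\phi_0,\phi_1,\phi_q$, and then to read the lexicographic inequalities off the geometry of $E_q$. Given an expansion $(d_i)$ of $x$, I write $T_n:=((d_{n+i}))_q=\sum_{i\ge1}d_{n+i}/q^i\in E_q$ for the value of the $n$-th tail, so that $T_{n-1}=\phi_{d_n}(T_n)$ for all $n\ge1$, with $T_0=x$. Two distinct expansions of the same point first disagree at some position $n$; since their prefixes coincide, the common value $T_{n-1}$ then lies in two of the three cylinders $\phi_0(E_q),\phi_1(E_q),\phi_q(E_q)$, and conversely any such double membership produces a genuine second expansion, obtained by extending the new tail value (which lies in $E_q$) arbitrarily. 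Hence $(d_i)\in\U_q'$ if and only if at no position $n$ does $T_{n-1}$ belong to a cylinder other than $\phi_{d_n}(E_q)$.

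First I would pin down which cylinders can overlap. From $E_q\subseteq[0,q/(q-1)]$ one computes $\phi_0(E_q)\subseteq[0,\tfrac1{q-1}]$, $\phi_1(E_q)\subseteq[\tfrac1q,\tfrac{2q-1}{q(q-1)}]$ and $\phi_q(E_q)\subseteq[1,\tfrac q{q-1}]$. The hypothesis $q>q^*$, equivalently $q^2-3q+1>0$ (and in particular $q>2$), gives $\tfrac{2q-1}{q(q-1)}<1$ and $\tfrac1{q-1}<1$; consequently $\phi_q(E_q)$ is disjoint from both $\phi_0(E_q)$ and $\phi_1(E_q)$, and the only pair whose convex hulls meet is $(\phi_0,\phi_1)$, in the window $[\tfrac1q,\tfrac1{q-1}]$. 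In particular, if $d_n=q$ then $T_{n-1}\in\phi_q(E_q)$ cannot lie in any other cylinder, so a digit $q$ never creates ambiguity and needs no condition, matching the lemma. It then remains to decide when $T_{n-1}$ lies in both $\phi_0(E_q)$ and $\phi_1(E_q)$: writing this out, the extra membership amounts to $T_n-1\in E_q$ when $d_n=0$, and to $T_n+1\in E_q$ when $d_n=1$.

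The remaining task is to convert these two value conditions into the stated lexicographic ones. The case $d_n=0$ is short: $T_n-1\in E_q$ forces $T_n\ge1$, and since $d_{n+1}\in\set{0,1}$ gives $T_n\le\tfrac{2q-1}{q(q-1)}<1$ while $d_{n+1}=q$ gives $T_n-1=T_{n+1}/q=\phi_0(T_{n+1})\in E_q$, one obtains $T_n-1\in E_q\iff d_{n+1}=q\iff(d_{n+i})\ge q0^\f$, so absence of ambiguity is exactly $(d_{n+i})<q0^\f$. The case $d_n=1$ is the crux and rests on the carry identities $0q=10$, and more generally $1^{k+1}0=0q^{k+1}$ together with the limiting $1^\f=0q^\f$, which exhibit the second expansion explicitly. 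Here I would prove $T_n+1\in E_q\iff(d_{n+i})\le1^\f$. For $\Leftarrow$, a sequence $\le1^\f$ is either $1^\f$, whence $T_n+1=q/(q-1)\in E_q$, or has its first non-$1$ entry equal to $0$, and an induction on the position of that $0$, using the equivalence $T_n+1\in E_q\iff qT_n\in E_q$ (valid because any point of $E_q$ that is $\ge1$ must start with digit $q$), builds the required expansion. For $\Rightarrow$ I argue the contrapositive: if $(d_{n+i})>1^\f$ then its first non-$1$ entry is a $q$, occurring after exactly $k$ ones, and the implication $T_{n+j}+1\in E_q\Rightarrow qT_{n+j}\in E_q$ together with $qT_{n+j}=1+T_{n+j+1}=T_{n+j+1}+1$ (valid while $d_{n+j+1}=1$) propagates the membership $k$ times, until the leading $q$ forces $T_{n+k}\ge1$ and hence $T_{n+k}+1\ge2>q/(q-1)=\sup E_q$, a contradiction. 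Thus no ambiguity at a digit $1$ is exactly $(d_{n+i})>1^\f$.

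I expect the $d_n=1$ equivalence to be the main obstacle: the forward direction needs the recursion $T_{n+j}+1\in E_q\Rightarrow T_{n+j+1}+1\in E_q$ to be set up cleanly and to terminate at the first $q$, where $q>2$ (guaranteed by $q>q^*$) is exactly what makes $2>\sup E_q$, while the backward direction must treat the non-terminating case $1^\f$ separately through the boundary identity $1^\f=0q^\f$. Collecting the three cases $d_n\in\set{0,1,q}$ over all $n$ then yields precisely the asserted characterisation of $\U_q'$.
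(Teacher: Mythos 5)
Your proof is correct, and there is an important caveat about the comparison: the paper itself contains no proof of this lemma --- it is imported verbatim from \cite[Lemma 3.1]{Zou_Lu_Li_2012} --- so your argument serves as a self-contained replacement for that citation rather than a variant of anything proved internally. What you do is the standard ``switch region'' analysis, and every step checks out. You correctly reduce $(d_i)\in\U_q'$ to the condition that no tail value $T_{n-1}=((d_{n-1+i}))_q$ lies in two of the cylinders $\phi_0(E_q),\phi_1(E_q),\phi_q(E_q)$; the hypothesis $q>q^*$, i.e.\ $q^2-3q+1>0$, gives exactly $\tfrac{2q-1}{q(q-1)}<1$, so $\phi_q(E_q)\subseteq[1,\tfrac{q}{q-1}]$ is separated from the other two cylinders and the digit $q$ needs no condition, while only the pair $(\phi_0,\phi_1)$ can overlap. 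Your two translations are also sound: for $d_n=0$, the overlap condition $T_n-1\in E_q$ holds iff $d_{n+1}=q$ (since $d_{n+1}\in\set{0,1}$ forces $T_n<1$), which is exactly $(d_{n+i})\ge q0^\f$; for $d_n=1$, the pivot is the equivalence $T+1\in E_q\iff qT\in E_q$, valid because $E_q\cap[1,\f)=\phi_q(E_q)$ when $q>q^*$, and your propagation along the initial block of $1$'s correctly splits into the three terminal behaviours: no termination gives $T_n+1=\tfrac{q}{q-1}\in E_q$, termination in $0$ gives membership by induction, and termination in $q$ gives $T_{n+k}+1\ge 2>\tfrac{q}{q-1}=\sup E_q$ (using $q>2$), a contradiction. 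This yields precisely ``ambiguity iff $(d_{n+i})\le 1^\f$,'' hence the stated characterization. The one step a referee might ask you to write out --- that a double cylinder membership genuinely produces a second expansion --- is already addressed in your first paragraph by splicing an arbitrary expansion of the new tail value after the common prefix, so nothing is missing.
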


To describe $\U_q'$ for $q\in(1, q^*]$
we need the following notation.  Let
\[\al(q)=(\al_i(q))\]
 be the \emph{quasi-greedy} $q$-expansion of $q-1$, i.e., the lexicographically largest $q$-expansion of $q-1$ with infinitely many non-zero digits.  We emphasize that $\al(q)$ is well-defined for $q\in(1, q^*]$. By (\ref{eq:qc-q*}) and a direct calculation one can verify that 
 \begin{equation}\label{eq:33}
\al(q_c)=q_c1^\f,\quad\al(q^*)=(q^*)^\f.
\end{equation}

  Note by   
 Theorem \ref{th:12} that for $q\in(1, q_c]$ we have $\U_q=\set{0, q/(q-1)}$, and then $\U_q'=\set{0^\f, q^\f}$. So, it suffices to consider    $\U'_q$  for $q\in(q_c, q^*]$. The following lemma was obtained in \cite[Lemmas 3.1 and 3.2]{Zou_Lu_Li_2012}.
\begin{lemma}\label{lem:22}
 Let  $q\in(q_c,q^*]$. Then 
 \[
 A_q\subseteq \U_q'\subseteq B_q,
 \]
 where $A_q$ is the set of sequences $(d_i)\in\set{0,1,q}^\N$ satisfying
\begin{equation}\label{eq:21}
\left\{
\begin{array}{lll}
(d_{n+i})\prec 1\al(q)&\quad\textrm{if}& d_n=0,\\
1^\f\prec(d_{n+i})\prec\al(q)&\quad\textrm{if}& d_n=1,\\
(d_{n+i})\succ 0q^\f&\quad\textrm{if}& d_n=q,\\
\end{array}
\right.
\end{equation}
and $B_q$ is the set of sequences $(d_i)\in\set{0,1,q}^\N$ satisfying the first two inequalities in (\ref{eq:21}).
\end{lemma}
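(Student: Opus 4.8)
The plan is to characterise $\U_q'$ geometrically, through the overlap structure of the three maps $\phi_0,\phi_1,\phi_q$ on the attractor $E_q=[0,q/(q-1)]$, and then to translate the resulting interval conditions into the lexicographic conditions of (\ref{eq:21}). Writing $\pi\big((c_i)\big):=\sum_{i\ge1}c_i/q^i$ for the value of a sequence and $T$ for the induced shift on values, the starting observation is standard: a sequence $(d_i)$ is the unique expansion of $\pi\big((d_i)\big)$ if and only if, for every $n\ge1$, the tail value $x_n:=T^{n-1}\pi\big((d_i)\big)$ lies in the cylinder $\phi_{d_n}(E_q)$ but in none of the others $\phi_a(E_q)$ with $a\ne d_n$; equivalently, no single digit can be replaced and then compensated for by a different continuation. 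Thus the problem reduces to locating the pairwise overlaps $\phi_a(E_q)\cap\phi_b(E_q)$.

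Setting $M:=q/(q-1)$, a direct computation gives $\phi_0(E_q)=[0,1/(q-1)]$, $\phi_1(E_q)=[1/q,(2q-1)/(q(q-1))]$ and $\phi_q(E_q)=[1,M]$. Since $q>q_c>2$ one checks $\phi_0(E_q)\cap\phi_q(E_q)=\emptyset$, while $\phi_0(E_q)\cap\phi_1(E_q)=[1/q,1/(q-1)]$ and $\phi_1(E_q)\cap\phi_q(E_q)=[1,(2q-1)/(q(q-1))]$ are nonempty (the latter uses $q\le q^*$). Consequently, if $d_n=0$ the only escape is through $\phi_1$, so uniqueness at position $n$ forces $x_n<1/q$; if $d_n=1$ both neighbours matter, forcing $1/(q-1)<x_n<1$; and if $d_n=q$ only $\phi_1$ matters, forcing $x_n>(2q-1)/(q(q-1))$. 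Substituting $x_n=\phi_{d_n}(y)$ with $y:=\pi\big((d_{n+i})\big)$ converts these into bounds on the tail value $y$: one gets $y<1$ when $d_n=0$; $1/(q-1)<y<q-1$ when $d_n=1$; and $y>y_0$ when $d_n=q$, where $y_0=(3q-1-q^2)/(q-1)$.

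The final step reads these value bounds off lexicographically, using $\pi(1\al(q))=1$, $\pi(1^\f)=\pi(0q^\f)=1/(q-1)$ and $\pi(\al(q))=q-1$, together with the fact that $1\al(q)$ and $\al(q)$ are the largest infinite expansions of $1$ and $q-1$ respectively. This yields exactly the first two inequalities of (\ref{eq:21}) as necessary and sufficient conditions at positions with $d_n\in\set{0,1}$, which already gives $\U_q'\subseteq B_q$, since $B_q$ imposes nothing when $d_n=q$. For $A_q\subseteq\U_q'$ one notes that $q>2$ forces $1/(q-1)>y_0$, so the $A_q$-condition $(d_{n+i})\succ0q^\f$ (i.e. $y>1/(q-1)$) is \emph{stronger} than the exact requirement $y>y_0$; hence a sequence satisfying all three conditions of $A_q$ keeps $x_n$ inside a single cylinder at every position and is therefore a unique expansion.

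I expect the main obstacle to be the passage between value inequalities and lexicographic inequalities. Because the alphabet $\set{0,1,q}$ is non-consecutive, the naive equivalence ``$\pi((c_i))<\pi((b_i))\iff(c_i)\prec(b_i)$'' genuinely fails (for instance $1^\f0^\f\succ0q^\f$ lexicographically while $1/q<1/(q-1)$ in value), so each translation must instead be justified through monotonicity of the quasi-greedy expansion in $x$, together with the fact that every tail of a unique expansion is itself its own quasi-greedy expansion; the boundary cases, where $x_n$ sits exactly on a cylinder endpoint and truly has two expansions, are what pin down the strict inequalities. This is also where the hypothesis $q>q_c$ is felt: $q_c$ is precisely the root of $\pi(q1^\f)=q-1$, i.e. the threshold at which $\al(q)\succ q1^\f$ and the band $1^\f\prec(d_{n+i})\prec\al(q)$ first admits nontrivial sequences, so that $\U_q'$ acquires expansions beyond $0^\f$ and $q^\f$, consistent with Theorem~\ref{th:12}.
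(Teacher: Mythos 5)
Your geometric setup is correct and is the right way to start: the three images are the intervals you compute, the $\set{0,q}$-overlap is empty for $q>2$, and the exact \emph{value} conditions for membership in $\U_q'$ are indeed $y<1$ after a digit $0$, $1/(q-1)<y<q-1$ after a digit $1$, and $y>y_0=(3q-1-q^2)/(q-1)$ after a digit $q$. The inclusion $\U_q'\subseteq B_q$ is also adequately sketched: in that direction one only needs value-to-lexicographic implications, and these do follow from strict monotonicity of quasi-greedy expansions, because every tail of a unique expansion of a nonzero point is again a unique (hence infinite, hence quasi-greedy) expansion of its own value. (For the record, the paper never proves this lemma; it quotes it from Lemmas 3.1 and 3.2 of \cite{Zou_Lu_Li_2012}, so your attempt has to stand on its own.)

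The gap is in $A_q\subseteq\U_q'$, where you need the reverse, lexicographic-to-value, implications for sequences that are only known to satisfy \eqref{eq:21}; such sequences are not known to be quasi-greedy expansions of anything --- that is essentially what is being proved --- so monotonicity of quasi-greedy expansions cannot be invoked. Concretely: (i) your one explicit translation, reading $(d_{n+i})\succ 0q^\f$ as ``$y>1/(q-1)$'', is not valid: lexicographically $(d_{n+i})\succ 0q^\f$ says only that $d_{n+1}\in\set{1,q}$, and a tail beginning with $1$ can have value as small as $1/q<1/(q-1)$ --- exactly the lex/value discrepancy you yourself warn about. (It can be repaired inside $A_q$, since $d_{n+1}=1$ forces $(d_{n+1+i})\succ1^\f$ and, for $q>2$, any sequence $\succ1^\f$ has value $>1/(q-1)$; but that repair already requires a digit-$1$ translation.) (ii) Much more seriously, the translations at digit-$0$ and digit-$1$ positions --- that $(d_{n+i})\prec\al(q)$ forces tail value $<q-1$, and $(d_{n+i})\prec1\al(q)$ forces tail value $<1$ --- are simply asserted (``necessary and sufficient'') and never argued; they are the technical heart of the lemma. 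They hinge on nontrivial lower bounds for the values $v_k$ of the tails $\sigma^k\al(q)$: one needs $v_k>y_0$ after every digit $q$ of $\al(q)$ and $v_k>1/(q-1)$ after every digit $1$, for otherwise the sequence agreeing with $\al(q)$ up to a position and then continuing with a smaller digit followed by $q^\f$ is lexicographically below $\al(q)$ yet has value $\ge q-1$. Proving these bounds (or the equivalent supremum/induction over $A_q$) is where $q>q_c$ actually does its work: for $q$ slightly above $q_c$ one has $\al(q)=q1^mq\cdots$, $v_1=q(q-2)$, and the bound at the first digit-$1$ position, $v_2=qv_1-1>1/(q-1)$, is equivalent to $q(q-1)(q-2)>1$, i.e.\ precisely to $q>q_c$. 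Your closing paragraph instead locates the role of $q_c$ in making the band $1^\f\prec\cdot\prec\al(q)$ nonempty; that concerns the non-triviality of $\U_q'$ (Theorem \ref{th:12}), not the validity of the inclusion. As written, the sufficiency half of the lemma is asserted rather than proved.
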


For $q>1$ let $\Phi:\set{0,1,q}^\N\ra\set{0,1,2}^\N$ be defined by
\[
\Phi((d_i))=(d_i'),
\] 
where $d_i'=d_i$ if $d_i\in\set{0,1}$, and $d_i'=2$ if $d_i=q$.  Clearly, $\Phi$ is bijective and strictly increasing. The following lemma was given in  \cite[Lemma 3.2]{Zou_Lu_Li_2012}.

\begin{lemma}\label{lem:23}
The map $q\ra\Phi(\al(q))$ is strictly increasing in $(1,q^*]$.
\end{lemma}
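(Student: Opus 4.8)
The plan is to transport everything to the common symbol space $\Omega:=\set{0,1,2}^\f$, on which $\Phi$ puts the expansions for different bases on an equal footing. For $r\in(1,q^*]$ let $\pi_r:\Omega\ra\R$ be the value functional $\pi_r((s_i))=\sum_{i\ge1}v_r(s_i)\,r^{-i}$, where $v_r(0)=0$, $v_r(1)=1$, $v_r(2)=r$; thus $\pi_r$ is just the $r$-value of the corresponding $\set{0,1,r}$-expansion. By definition $\Phi(\al(r))$ is the lexicographically largest infinite element of $\Omega$ with $\pi_r(\Phi(\al(r)))=r-1$, and since $\Phi$ is an order isomorphism the lemma amounts to showing $\Phi(\al(p))\prec\Phi(\al(q))$ whenever $1<p<q\le q^*$. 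Fix such $p,q$, write $s:=\Phi(\al(p))$, and proceed in two steps: first show that reading $s$ in the larger base $q$ strictly lowers its value, then use strict monotonicity of the quasi-greedy coding in the fixed base $q$ to convert this value drop into a lexicographic inequality.

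For the first step I would compare $\pi_q(s)$ with $\pi_p(s)=p-1$ symbol by symbol. A symbol $0$ contributes $0$ in both; a symbol $1$ at position $i$ contributes $q^{-i}<p^{-i}$; and a symbol $2$ at position $i$ contributes $q^{1-i}\le p^{1-i}$, with equality only at $i=1$. As $s$ is infinite it carries a nonzero symbol at some index $i\ge2$, so the inequality is strict and $W:=\pi_q(s)<p-1<q-1$. Note $W\in(0,q-1)$, and $q-1\le q/(q-1)$ because $q\le q^*$, so both $W$ and $q-1$ lie in the expandable range $[0,q/(q-1)]$.

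The second step bridges $s$ to $\Phi(\al(q))$ through the quasi-greedy expansion of $W$. Let $\ga\in\Omega$ be the lexicographically largest infinite element with $\pi_q(\ga)=W$, i.e.\ the (transported) quasi-greedy $q$-expansion of $W$. Since $s$ is itself an infinite element of $\Omega$ with $\pi_q(s)=W$, maximality gives $s\preceq\ga$. Invoking strict monotonicity of the quasi-greedy coding in the same base $q$ (the map from a value in $(0,q/(q-1)]$ to its quasi-greedy expansion in $\Omega$ is strictly $\prec$-increasing), the strict inequality $W<q-1$ yields $\ga\prec\Phi(\al(q))$. Chaining $s\preceq\ga\prec\Phi(\al(q))$ then gives $\Phi(\al(p))\prec\Phi(\al(q))$, as required. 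The monotonicity fact I would establish from the transfer dynamics: the quasi-greedy leading digit is the largest $d\in\set{0,1,q}$ admitting an infinite expansion with that leading digit, and inspecting the three feasibility intervals shows it is a nondecreasing step function of the value ($0$ on $[0,1/q)$, $1$ on $[1/q,1)$, and $q$ on $[1,q/(q-1)]$). Equal leading symbols preserve order under the remainder map $y\mapsto qy-d$, unequal leading symbols settle the comparison at the first coordinate, and an induction gives weak $\prec$-monotonicity; injectivity (applying $\pi_q$ to a quasi-greedy expansion returns its value) upgrades this to strict monotonicity, with a standard limiting argument handling the countably many tie values at which greedy expansions are finite.

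I expect the main obstacle to be precisely this monotonicity, because for $q\le q^*$ the three feasibility intervals \emph{overlap}, so for arbitrary sequences lexicographic order and $\pi_q$-value are genuinely unrelated; this overlap is exactly the phenomenon responsible for multiple expansions. Consequently one cannot compare $s$ and $\Phi(\al(q))$ by their values directly, and the whole force of the argument comes from staying inside the class of quasi-greedy sequences, for which value and lexicographic order do move together, together with the double role played by $\ga$: as the lexicographically largest infinite expansion of $W$ it bounds $s$ from above, and as a strictly monotone function of the value it separates $W$ from $q-1$.
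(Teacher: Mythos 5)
The paper does not actually prove this lemma: it is imported verbatim from \cite[Lemma 3.2]{Zou_Lu_Li_2012}, so there is no in-paper argument to compare against. Judged on its own, your proposal is correct, and it is essentially the classical (Parry-type) argument that underlies the cited result. Both pillars are sound: (i) reading $s=\Phi(\al(p))$ in the larger base strictly lowers the value, where you correctly isolate the only delicate term (a symbol $2$ in position $1$ contributes $1$ in both bases) and get strictness from the infiniteness of $s$; and (ii) the bridge $s\preceq\ga\prec\Phi(\al(q))$ via the quasi-greedy $q$-expansion $\ga$ of $W=\pi_q(s)$, which correctly separates the role of ``lexicographically largest infinite expansion of a fixed value'' (giving $s\preceq\ga$) from ``strict monotonicity of the map value $\mapsto$ quasi-greedy expansion in a fixed base'' (giving $\ga\prec\Phi(\al(q))$ from $W<q-1$). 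You also place the hypothesis $q\le q^*$ exactly where it belongs: it is what makes the three feasibility intervals cover $(0,q/(q-1)]$, i.e.\ makes $E_q$ an interval, so that the quasi-greedy algorithm never gets stuck and $q-1$ lies in the representable range.

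One cosmetic slip worth fixing: with the quasi-greedy (infinite-expansion) convention the digit intervals are left-open and right-closed, so the leading digit is $0$ on $(0,1/q]$, $1$ on $(1/q,1]$ and $q$ on $(1,q/(q-1)]$; the intervals you wrote, closed on the left, describe the \emph{greedy} digit, under which for instance $1/q$ would receive the finite expansion $10^\f$. Nothing in your argument is affected, since you only use that the digit function is nondecreasing and that $\pi_q$ inverts the quasi-greedy coding (which upgrades weak monotonicity to strict); indeed, once injectivity is invoked, the ``limiting argument'' you allude to at the end is not needed at all.
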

By (\ref{eq:33}) and Lemma \ref{lem:23} it follows that for any $q\in(q_c, q^*)$ we have $q1^\f\prec \al(q)\prec q^\f$.

\section{Proof of Theorem \ref{th:11}}\label{sec:proof of th 11}
In this section we will investigate the set $\B_k$ of bases $q>1$ in which there exists $x\in E_q$ having   $k$ different $q$-expansions. Excluding the trivial case for $k=1$ that $\B_1=(1,\f)$  we consider $\B_k$ for $k= 2,3,\cdots,\aleph_0$ or $2^{\aleph_0}$.

The following lemma was established in \cite[Theorem 4.1]{Zou_Lu_Li_2012} and \cite[Theorem 1.1]{Ge_Tan_2015_4}.
\begin{lemma}\label{lem:31}
Let $q\in(1,2)$. 
\begin{enumerate}[{\rm(i)}]
\item If $q\in(1,2)$, then any $x\in E_q$  has either a unique $q$-expansion, or a continuum of $q$-expansions. 

\item If $q=2$, then  any $x\in E_q$ can only have  a unique $q$-expansion, countably infinitely many $q$-expansions, or a continuum of $q$-expansions. 
\end{enumerate}
\end{lemma}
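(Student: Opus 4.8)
The plan is to read the set of $q$-expansions of a fixed $x\in E_q$ as the set of infinite branches (ends) of a finitely branching tree: the root is $x$, and a node $y\in E_q$ has, for each digit $d\in\set{0,1,q}$ with $qy-d\in E_q$ (equivalently $y\in\phi_d(E_q)$), a child $qy-d$. Since every node has between one and three children, the number of ends is either finite, $\aleph_0$, or $2^{\aleph_0}$. Thus to prove the first assertion I must exclude a finite number $\ge 2$ \emph{and} exclude $\aleph_0$ when $q\in(1,2)$; for the second assertion (the case $q=2$) I must exclude only the finite case $\ge2$. A standard first-difference argument reduces any point with two expansions to a \emph{branch node}, i.e. a $y\in E_q$ admitting two distinct admissible digits, and since $q<2\le q^*$ the attractor is the interval $E_q=[0,L]$ with $L=q/(q-1)$, so I can work entirely with the interval images $I_d=\phi_d(E_q)$ and their overlaps.

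The key input for excluding finitely many ($\ge2$) expansions is Theorem \ref{th:12}: for $q\in(1,q_c]$ — and $(1,2]\subset(1,q_c]$ since $q_c\approx 2.32472$ — we have $\U_q=\set{0,L}$, so the only points of $E_q$ with a unique expansion are the endpoints $0$ and $L$ (with expansions $0^\f$ and $q^\f$). Suppose $x$ had exactly $k$ expansions with $2\le k<\f$. Then the tree has finitely many branch nodes, so there is a \emph{last} branch node $y$, all of whose children carry a unique expansion; by the above these children lie in $\set{0,L}$, and being distinct they are exactly $0$ and $L$. Writing $0=qy-d_1$ and $L=qy-d_2$ and subtracting gives $d_1-d_2=L=q/(q-1)$, with $d_1,d_2\in\set{0,1,q}$. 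For $q\in(1,2)$ one has $q/(q-1)>q\ge d_1-d_2$, a contradiction, so no last branch node exists and the finite case $\ge2$ is impossible. For $q=2$ the only solution is $d_1=q=2,\ d_2=0$, forcing $y=1$; but $y=1$ also admits the digit $1$, whose child $q\cdot1-1=1$ is again the branch node $y$ itself, so $y$ is \emph{not} a last branch node. Hence the finite case $\ge2$ is excluded for all $q\in(1,2]$, which already settles the $q=2$ statement (leaving unique, $\aleph_0$, or $2^{\aleph_0}$), and the self-loop at $y=1$ is exactly the mechanism producing the genuinely countable example.

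For $q\in(1,2)$ it remains to exclude $\aleph_0$, and here I would exploit that the overlap $I_0\cap I_q=[1,\,1/(q-1)]$ is a \emph{nondegenerate} interval precisely when $q<2$ (it collapses to $\set{1}$ at $q=2$). The plan is to construct a subinterval $W\subseteq\mathrm{int}(E_q)$ of positive length on which double branching recurs, i.e. such that every $y\in W$ admits two distinct admissible digit-\emph{blocks} returning into $W$; concretely one seeks $W\subseteq\bigcup_{B\ne B'}\big(\phi_B(W)\cap\phi_{B'}(W)\big)$ over blocks $B,B'$ of some common length $m$. The length budget is favorable because the $3^m$ block images each have length $|W|/q^m$ and total length $(3/q)^m|W|\to\f$ for $q<2<3$, while the nondegenerate overlaps make the covering genuinely at least twofold; any $y\in W$ then carries a full binary tree of expansions, hence a continuum. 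Finally I would connect an arbitrary multiple-expansion $x$ to this regime: having already produced infinitely many branch nodes (all lying in the interval $[1/q,\,(2q-1)/(q(q-1))]$ where two digits are available), I would show that some node of the tree of $x$ is reachable inside $W$, so that subtree — and hence $x$ — has a continuum of ends, ruling out $\aleph_0$. I expect this last covering-and-reachability step to be the main obstacle, as it must be quantitative enough to force recurrence of double branching on a positive-length set; the degeneration of $I_0\cap I_q$ to a single point at $q=2$ is exactly what makes the argument fail there and explains why the dichotomy is sharp.
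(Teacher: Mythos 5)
Your tree formulation and the resulting trichotomy (the expansions of $x$ are the ends of a finitely branching tree in which every node extends to an end, so their number is finite, $\aleph_0$, or $2^{\aleph_0}$) are correct, and your exclusion of $2\le k<\f$ on all of $(1,2]$ is a complete and correct argument: invoking Theorem \ref{th:12} is legitimate here (the paper treats it as an external input, logically independent of Lemma \ref{lem:31}), a deepest branch node $y$ would have at least two distinct children, all with unique expansions, hence exactly the two children $0$ and $q/(q-1)$, forcing $d_1-d_2=q/(q-1)$; this is impossible for $q<2$ since $q/(q-1)>q$, and for $q=2$ it forces $y=1$, where the additional child $2\cdot 1-1=1=y$ contradicts the maximality of the depth of $y$. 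In particular the $q=2$ half of the lemma is fully proved by your argument.

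For $q\in(1,2)$, however, the heart of the lemma is the exclusion of $\aleph_0$ --- equivalently, that every $x$ with at least two expansions has a continuum of them --- and for this you give only a plan, which you yourself flag as incomplete; this is a genuine gap, not a routine verification. Two steps of the plan do not go through as described. First, the length heuristic does not produce the set $W$: from $(3/q)^m|W|\to\f$ one cannot conclude $W\subseteq\bigcup_{B\ne B'}\left(\phi_B(W)\cap\phi_{B'}(W)\right)$, because total length controls neither where the images $\phi_B(W)$ sit relative to $W$ nor the multiplicity of their covering on $W$; an explicit construction is required. Second, the reachability step is where the real difficulty is concentrated. Writing $L=q/(q-1)$, the point $z=(L+1)/q\in(0,L)$ is a branch node with exactly two children, namely the dead end $L$ (unique expansion $q^\f$) and $L+1-q$; if the subtree of $L+1-q$ had all of its branch nodes again at $z$, the expansion tree would be a caterpillar with exactly $\aleph_0$ ends. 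The truth of the lemma means such configurations cannot occur, but nothing in your proposal rules them out, and ruling them out is essentially the whole content of the claim. Note finally that the statement you leave open is precisely what the paper cites rather than proves: Lemma \ref{lem:31} is quoted from \cite[Theorem 4.1]{Zou_Lu_Li_2012} (the case $q\in(1,2)$) and \cite[Theorem 1.1]{Ge_Tan_2015_4} (the case $q=2$), so there is no in-paper argument to fall back on --- the missing step is the cited theorem itself.
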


  For $q>1$ we recall that $\phi_d(x)=(x+d)/q$ for $d\in\set{0,1,q}$. Let 
  \begin{equation}\label{eq:31}
  S_q:=\left(\phi_0(E_q)\cap\phi_1(E_q)\right)\cup\left(\phi_1(E_q)\cap\phi_q(E_q)\right).
  \end{equation}
Then $S_q$ is associated with the \emph{switch region}, since  any $x\in S_q$ has at least two $q$-expansions. 
More precisely,  any $x\in\phi_0(E_q)\cap\phi_1(E_q)$ has at least two $q$-expansions: one begins with the digit $0$ and one begins with the digit $1$. Accordingly, any $x\in\phi_1(E_q)\cap \phi_q(E_q)$ also has at least two $q$-expansions: one starts with the digit $1$ and one starts with the digit $q$. We point out that the union in (\ref{eq:31}) is disjoint if $q>2$. In particular,  for $q>q^*$ the intersection $\phi_1(E_q)\cap\phi_q(E_q)=\emptyset$.

  For $x\in E_q$ let $\Sigma(x)$ be the set of all $q$-expansions of $x$, i.e., 
  \[
  \Sigma(x):=\set{(d_i)\in\set{0,1,q}^\N: ((d_i))_q=x},
  \]
 and denote its cardinality by $|\Sigma(x)|$. 
  
  We recall from \cite{Baker_2015} that 
  a point $x\in S_q$ is called a \emph{$q$-null infinite  point} if $x$ has an expansion $(d_i)\in\set{0,1,q}^\N$ such that whenever 
  \[x_n:=(d_{n+1}d_{n+2}\cdots)_q\in S_q,\]
   one of the following quantities is infinity, and the other two are finite:
  \[
\left  |\Sigma(\phi_0^{-1}(x_n))\right|,\quad \left  |\Sigma(\phi^{-1}_{1}(x_n))\right|\quad\textrm{and}\quad \left  |\Sigma(\phi^{-1}_{q}(x_n))\right|.
  \]
Then any $q$-null infinite point has countably infinitely many $q$-expansions. 

First we consider the set $\B_{\aleph_0}$, which is based on  the following characterization (cf.~\cite{Baker_2015, Zou_Kong_2015}).
\begin{lemma}\label{lem:32}
$q\in\B_{\aleph_0}$ if and only if $S_q$ contains a $q$-null infinite point. 
\end{lemma}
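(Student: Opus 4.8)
The plan is to analyze the \emph{expansion tree} of a point and to translate the combinatorial condition defining a $q$-null infinite point into a statement about the cardinality of its boundary. For $x\in E_q$, organize $\Sigma(x)$ as the set of infinite branches of the tree $T_x$ whose nodes are the admissible prefixes $d_1\cdots d_n$ (those with $\phi_{d_n}^{-1}\cdots\phi_{d_1}^{-1}(x)\in E_q$); a node branches precisely when its remainder lies in $S_q$. Since $T_x$ has branching at most three and $\Sigma(x)$ is a closed subset of the shift space, the perfect-set dichotomy gives $|\Sigma(x)|\in\{0,1,2,\dots\}\cup\{\aleph_0,2^{\aleph_0}\}$, and the recursion $|\Sigma(x)|=\sum_{d:\,x\in\phi_d(E_q)}|\Sigma(\phi_d^{-1}(x))|$ is the engine for everything below. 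Writing $g(x):=|\Sigma(x)|$, I call a switch point $z\in S_q$ \emph{doubly infinite} if at least two of the branches $\phi_d^{-1}(z)$ have $g=\f$; the heart of the matter is the dichotomy that $g(x)=2^{\aleph_0}$ exactly when $T_x$ contains an embedded full binary tree of doubly infinite switches, and $g(x)=\aleph_0$ when $T_x$ is infinite but contains no such subtree.

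For the direction ``$S_q$ contains a $q$-null infinite point $\Rightarrow q\in\B_{\aleph_0}$'' (the easy half), I would take a witnessing expansion $(d_i)$ of such an $x$ and first argue it may be chosen to follow, at every switch, the unique infinite branch; the defining condition then forces infinitely many switch times (otherwise some tail would continue uniquely and have finite $g$, contradicting infinitude of that branch). Branching off into a nonempty finite branch at each of these infinitely many switches yields infinitely many distinct expansions, so $g(x)\ge\aleph_0$; and since along the spine no switch is doubly infinite and each off-branch carries only finitely many expansions, $g(x)$ is a countable sum of finite numbers, whence $g(x)\le\aleph_0$. Thus $g(x)=\aleph_0$ and $x$ witnesses $q\in\B_{\aleph_0}$.

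The substantive half is ``$q\in\B_{\aleph_0}\Rightarrow S_q$ contains a $q$-null infinite point''. I would fix $x_*$ with $g(x_*)=\aleph_0$ and argue by contradiction: suppose no point reachable from $x_*$ in $T_{x_*}$ with $g=\aleph_0$ is $q$-null infinite. Any such reachable point $y$ has $g(y)\le g(x_*)=\aleph_0$, and failing the defining condition means that every expansion of $y$ — in particular the one obtained by always following an infinite branch — has a tail $z\in S_q$ with two infinite branches; since $g(z)\le\aleph_0$, both of those branches have $g=\aleph_0$. Applying the same observation to each of these two $\aleph_0$-branches repeatedly produces an embedded full binary tree of doubly infinite switches inside $T_{x_*}$, forcing $g(x_*)=2^{\aleph_0}$ and contradicting $g(x_*)=\aleph_0$. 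Hence some reachable $y$ with $g(y)=\aleph_0$ is $q$-null infinite; replacing $y$ by its first switch tail (which has the same $g$ and inherits the condition) puts the point in $S_q$, as required.

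I expect the main obstacle to be making the cardinality dichotomy of the first step fully rigorous and, relatedly, controlling \emph{all} expansions of a $q$-null infinite point rather than only the witnessing one: the definition constrains a single expansion $(d_i)$, so the upper bound $g(x)\le\aleph_0$ in the easy half really requires ruling out a doubly infinite switch on some \emph{other} branch of $T_x$. The right tool is a Cantor--Bendixson / König's-lemma analysis of the finitely branching tree $T_x$, using the special structure of the switch region $S_q$ for the digit set $\set{0,1,q}$ to guarantee that the infinite branch at a switch is unique throughout the whole tree, not merely along the spine. Verifying this last point — that the local ``one infinite, two finite'' condition propagates to genuine countability of the boundary — is where the care is needed.
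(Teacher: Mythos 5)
Your statement is one the paper does not prove at all: Lemma \ref{lem:32} is quoted from \cite{Baker_2015} (see also \cite{Zou_Kong_2015}), so your proposal can only be judged on its own merits. Your ``substantive half'' ($q\in\B_{\aleph_0}\Rightarrow$ existence of a $q$-null infinite point in $S_q$) is essentially correct and is the standard Baker-style argument: the expansion tree, the recursion $|\Sigma(x)|=\sum_d|\Sigma(\phi_d^{-1}(x))|$, the observation that repeated failure of the one-infinite--two-finite condition along ``follow an infinite branch'' expansions embeds a full binary tree of doubly infinite switches and forces $|\Sigma(x_*)|=2^{\aleph_0}$, and the final passage to the first switch tail to land the point in $S_q$. (One point you use implicitly and should record: every reachable remainder $z$ satisfies $|\Sigma(z)|\le|\Sigma(x_*)|=\aleph_0$, which is what rules out a branch of cardinality $2^{\aleph_0}$ at a failing switch.)

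The genuine gap is in your ``easy half,'' at exactly the step you yourself flagged: ``argue [the witness] may be chosen to follow, at every switch, the unique infinite branch.'' This step is not merely unproven --- with the definition as literally stated in the paper (one of the three counts ``is infinity,'' the other two finite, checked only along the tails of a single witnessing expansion) it cannot be proven, because under that reading the implication is false. Concretely, take any $q\in(1,2)$ and $x=1/q=(10^\f)_q$. Then $x\in\phi_0(E_q)\cap\phi_1(E_q)\subseteq S_q$, and the three counts are $|\Sigma(\phi_0^{-1}(x))|=|\Sigma(1)|=2^{\aleph_0}$ (since $1\in\phi_1(E_q)\cap\phi_q(E_q)\subseteq S_q$, Lemma \ref{lem:31} gives a continuum of expansions), $|\Sigma(\phi_1^{-1}(x))|=|\Sigma(0)|=1$, and $|\Sigma(\phi_q^{-1}(x))|=0$; moreover the witness $10^\f$ never meets $S_q$ after time $0$, so the defining condition holds vacuously for all $n\ge1$. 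Thus $x$ is a ``$q$-null infinite point'' under the literal reading, yet $\B_{\aleph_0}=[2,\f)$ excludes $q$. The moral is that when the witness dives into a finite branch, the definition says nothing whatsoever about the infinite branch sitting next to it, which may carry $2^{\aleph_0}$ expansions and doubly infinite switches; no re-routing of the witness is available, and no ``special structure of $\set{0,1,q}$'' can rescue this, since the counterexample lives in that very setting. The repair is definitional, not technical: in Baker's actual notion the distinguished quantity must equal $\aleph_0$ (not merely be infinite), and with that reading your easy half collapses to one line --- the condition at $n=0$ alone gives $|\Sigma(x)|=\aleph_0+\mathrm{finite}=\aleph_0$ --- so none of the spine machinery is needed. (The same issue shows that the paper's sentence ``Clearly, any $q$-null infinite point has countably infinitely many $q$-expansions'' is only correct under the corrected definition.) So: your second half stands; your first half, as written, rests on a false step whose only fix is to change the definition it starts from.
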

 
\begin{lemma}\label{lem:33}
$\B_{\aleph_0}=[2,\f)$.
\end{lemma}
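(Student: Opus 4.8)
The plan is to prove the two inclusions $\B_{\aleph_0}\subseteq[2,\f)$ and $[2,\f)\subseteq\B_{\aleph_0}$ separately, using Lemma \ref{lem:31} for the first and Lemma \ref{lem:32} (the characterization via $q$-null infinite points) for the second.

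For the inclusion $\B_{\aleph_0}\subseteq[2,\f)$, I would argue by contradiction using Lemma \ref{lem:31}. That lemma states that for $q\in(1,2)$ every $x\in E_q$ has either a unique $q$-expansion or a continuum of them, so no $x$ can have exactly countably infinitely many $q$-expansions. Hence $\U_q^{(\aleph_0)}=\emptyset$ for $q\in(1,2)$, which means $(1,2)\cap\B_{\aleph_0}=\emptyset$, and therefore $\B_{\aleph_0}\subseteq[2,\f)$.

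For the reverse inclusion $[2,\f)\subseteq\B_{\aleph_0}$, the strategy is to exhibit, for each $q\ge 2$, a $q$-null infinite point in $S_q$; Lemma \ref{lem:32} then gives $q\in\B_{\aleph_0}$ immediately. The natural candidate is the fixed point of the overlap structure: when $q\ge 2$ the intersection $\phi_0(E_q)\cap\phi_1(E_q)$ (and/or $\phi_1(E_q)\cap\phi_q(E_q)$) is nonempty, and one looks for a point $x\in S_q$ whose orbit under the inverse maps $\phi_d^{-1}$ returns to $S_q$ in a controlled way. Concretely, I would try to build a point $x$ with an expansion $(d_i)$ such that each time the tail $x_n=((d_{n+i}))_q$ lands in $S_q$, exactly one of the three preimages $\phi_0^{-1}(x_n),\phi_1^{-1}(x_n),\phi_q^{-1}(x_n)$ lies in $E_q$ and itself carries infinitely many expansions while the other admissible branch is a genuine endpoint contributing only finitely many (ideally unique) expansions. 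The boundary point $q/(q-1)$ and the images of the overlap endpoints are the concrete places to look, and I expect the construction to split into the case $q\in[2,q^*]$, where $E_q$ is the full interval $[0,q/(q-1)]$ and the switch region is an interval, and the Cantor-set case $q>q^*$, where $\phi_1(E_q)\cap\phi_q(E_q)=\emptyset$ and one works only with $\phi_0(E_q)\cap\phi_1(E_q)$.

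The main obstacle will be verifying the $q$-null infinite condition itself, namely checking that along the chosen orbit exactly one of the three cardinalities $|\Sigma(\phi_0^{-1}(x_n))|$, $|\Sigma(\phi_1^{-1}(x_n))|$, $|\Sigma(\phi_q^{-1}(x_n))|$ is infinite while the other two are finite, rather than merely verifying that $x$ has countably many expansions in aggregate. This requires a careful analysis of where the preimages fall relative to the switch region $S_q$ and relative to the parts of $E_q$ carrying unique versus multiple expansions, and it is here that the characterizations in Lemmas \ref{lem:21}--\ref{lem:23} of unique expansions, together with the explicit form of $S_q$ in \eqref{eq:31}, should do the bookkeeping. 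The threshold $q=2$ should emerge precisely as the smallest base for which such a self-returning infinite orbit with the required branching pattern can exist; for $q<2$ the overlaps are too large, forcing a continuum rather than a countable set, which is consistent with the first inclusion.
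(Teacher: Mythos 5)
Your first inclusion is correct and is exactly what the paper does: Lemma \ref{lem:31} shows no $x$ can have countably infinitely many $q$-expansions when $q\in(1,2)$, so $\B_{\aleph_0}\subseteq[2,\f)$; the paper also reads $2\in\B_{\aleph_0}$ off Lemma \ref{lem:31}, so the reverse inclusion only needs to be established for $q>2$.

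The problem is the second half: you state the right strategy (produce a $q$-null infinite point and invoke Lemma \ref{lem:32}), but you never produce the point, and you explicitly defer what you call ``the main obstacle,'' namely verifying the null-infinite condition. That verification \emph{is} the lemma; without it you have a plan, not a proof. The paper closes this gap with one concrete choice that makes the check a three-line computation: take
\[
x=(0q^\f)_q=\frac{1}{q-1}.
\]
For $q>2$ the union in (\ref{eq:31}) is disjoint, and since $x=(10q^\f)_q$ as well, we get $x\in\phi_0(E_q)\cap\phi_1(E_q)\setminus\phi_q(E_q)$. The substitution $10\sim 0q$ gives the expansions $1^k0q^\f$, $k\ge 0$, so $|\Sigma(x)|=\f$. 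Then one computes the three preimages exactly: $\phi_0^{-1}(x)=qx=q/(q-1)=(q^\f)_q\in\U_q$, so its cardinality is $1$; $\phi_q^{-1}(x)=qx-q<0\notin E_q$, so its cardinality is $0$; and, crucially, $\phi_1^{-1}(x)=qx-1=x$, so the unique infinite branch is a \emph{fixed point} returning to $x$ itself, which is why the condition propagates along the whole orbit by iteration. Your heuristics (``fixed point of the overlap structure,'' ``the boundary point $q/(q-1)$'') point at precisely these objects, but the proof only exists once the point is written down and these cardinalities are computed. Two further remarks: the case split at $q^*$ you anticipate is unnecessary, since this single point works for every $q>2$; and your closing hope that ``the threshold $q=2$ should emerge'' from the construction is backwards --- bases $q<2$ are excluded by the first inclusion via Lemma \ref{lem:31}, not by a failure of the construction (indeed at $q=2$ the analogous point $1=(0q^\f)_q$ is still a $q$-null infinite point; the only change is that $\phi_q^{-1}(x)=0$ then lies in $E_q$ with a unique expansion instead of falling outside $E_q$, which is how Lemma \ref{lem:51} later handles $q=2$).
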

\begin{proof}
By Lemma \ref{lem:31} we have  $\B_{\aleph_0}\subseteq[2,\f)$  and $2\in\B_{\aleph_0}$. So, it suffices to prove $(2,\f)\subseteq\B_{\aleph_0}$.

Take $q\in(2,\f)$. Note that $0=(0^\f)_q$ and $q/(q-1)\in(q^\f)_q$  belong to $\U_q$. We claim that 
\[
x=(0q^\f)_q
\]
is a $q$-null infinite point.
Note that $(10^\f)_q=(0q0^\f)_q$. Then 
by the words substitution $10\sim 0q$ it follows that all expansions $1^k0 q^\f, k\ge 0,$ are  $q$-expansions of $x$, i.e.,
\[
\bigcup_{k=0}^\f\set{1^k0q^\f}\subseteq\Sigma(x).
\] 
This implies that $|\Sigma(x)|=\f$.
Furthermore, since $q>2$, the union in (\ref{eq:31}) is disjoint. This implies 
\[
x=(0q^\f)_q=(10q^\f)_q\in\phi_0(E_q)\cap\phi_1(E_q)\setminus\phi_q(E_q).
\]
Then   $\phi_0^{-1}(x)=(q^\f)_q\in\U_q$, $\phi_1^{-1}(x)=x$ and $\phi_q^{-1}(x)\notin E_q$, i.e., 
\[
|\Sigma(\phi_0^{-1}(x))|=1,\quad |\Sigma(\phi^{-1}_1(x))|=\f,\quad |\Sigma(\phi_q^{-1}(x))|=0.
\]

By iteration it follows  that  $x$ is a $q$-null infinite point. Hence,  by Lemma  \ref{lem:32} we have $q\in\B_{\aleph_0}$, and therefore $(2,\f)\subseteq\B_{\aleph_0}$.
\end{proof}

Now we turn to describe the set $\B_k$. By Lemma \ref{lem:31} it follows that $\B_k\subseteq(2,\f)$ for any $k\ge 2$. 
First we consider $\B_2$ and need the following
\begin{lemma}\label{lem:34}
Let $q>2$. Then $q\in\B_2$ if and only if  either
\[
(0(a_i))_q=(1(b_i))_q\qquad\textrm{for some}\quad (a_i), (b_i)\in\U_q',
\]
or  
\[
(1(c_i))_q=(q(d_i))_q\qquad\textrm{for some}\quad(c_i), (d_i)\in\U_q'.
\]
\end{lemma}
\begin{proof}
First we prove the necessary condition.  Take $q\in \B_2$.  Suppose   $x\in E_q$ has  two different $q$-expansions, say
\[
((a_i))_q=x=((b_i))_q.
\]
Then there exists a least integer  $k\ge 1$ such that $a_k\ne b_k$.   Then  
\begin{equation}\label{eq:32}
(a_k a_{k+1}\cdots)_q=(b_kb_{k+1}\cdots)_q\in S_q \quad\textrm{and}\quad  (a_{k+i}), (b_{k+i})\in\U'_q.
\end{equation}
Since $q>2$, it gives that  the union in (\ref{eq:31}) is disjoint. Then the necessity follows by (\ref{eq:32}).

To prove the sufficiency, without loss of generality, we assume  $(0(a_i))_q=(1(b_i))_q$ with $(a_i), (b_i)\in\U_q'$. Note by $q>2$ that the union in (\ref{eq:31}) is disjoint. Then 
\[
(0(a_i))_q=(1(b_i))\in\phi_0(E_q)\cap\phi_1(E_q)\setminus\phi_q(E_q).
\]
This implies that $x$ has exactly  two different $q$-expansions. So, $q\in \B_2$. 
\end{proof}

Recall from (\ref{eq:33}) that $q_c\approx 2.32472$ and $q^*=(3+\sqrt{5})/2$ admit the quasi-greedy expansions $\al(q_c)=q_c1^\f$ and $\al(q^*)=(q^*)^\f.$
In the following lemma we describe the set $\B_2$.
\begin{lemma}\label{lem:35}
 $\B_2=(q_c,\f)$.
\end{lemma}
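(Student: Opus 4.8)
The plan is to prove the two inclusions $\B_2 \subseteq (q_c, \infty)$ and $(q_c, \infty) \subseteq \B_2$ separately, using the characterization from Lemma \ref{lem:34} as the main bridge. By Lemma \ref{lem:31} we already know $\B_2 \subseteq (2,\infty)$, so throughout we may assume $q > 2$ and apply Lemma \ref{lem:34}, which says $q \in \B_2$ iff there exist unique expansions realizing a coincidence of the form $(1(a_i))_q = (0(b_i))_q$ or $(1(c_i))_q = (q(d_i))_q$.

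First I would dispose of the easier inclusion $\B_2 \subseteq (q_c,\infty)$, equivalently showing that $q \in (2, q_c]$ forces $q \notin \B_2$. By Theorems \ref{th:11} and \ref{th:12} (the discussion following Theorem \ref{th:13} makes this explicit) the univoque set for $q \le q_c$ is just $\U_q = \set{0, q/(q-1)}$, so the only unique expansions available are $0^\f$ and $q^\f$. The strategy is to check directly that no coincidence of the two types in Lemma \ref{lem:34} can be built from these two sequences: one writes out the finitely many candidate identities $(1\,0^\f)_q = (0\,q^\f)_q$, $(1\,q^\f)_q = (0\,q^\f)_q$, $(1\,0^\f)_q = (q\,0^\f)_q$, etc., and shows each fails for $q \le q_c$, the boundary case being governed precisely by the defining cubic $x^3 - 3x^2 + 2x - 1 = 0$. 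This is essentially a routine computation once the candidate list is pinned down.

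The substantive direction is $(q_c,\infty) \subseteq \B_2$, and this is where I expect the main obstacle. Here I would produce, for each $q > q_c$, an explicit pair of unique expansions satisfying one of the coincidences in Lemma \ref{lem:34}. The natural idea is to exploit the relation $\al(q_c) = q_c 1^\f$ and the strict monotonicity of $q \mapsto \Phi(\al(q))$ from Lemma \ref{lem:23}: for $q > q_c$ one has $\Phi(\al(q)) > \Phi(q_c 1^\f) = 21^\f$ (in the $\set{0,1,2}^\f$ coding), which loosens the admissibility conditions \eqref{eq:21} of Lemma \ref{lem:22} just enough to allow a sequence beginning with a nontrivial block to satisfy the univoque conditions. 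Concretely, I would try to exhibit sequences $(a_i), (b_i)$ forced into $A_q \subseteq \U_q'$ by \eqref{eq:21} and verify the algebraic identity $(1(a_i))_q = (0(b_i))_q$, reducing it to a statement that the relevant tail sums balance. The delicate part will be confirming that the candidate tails genuinely satisfy the lexicographic inequalities in \eqref{eq:21} for \emph{all} $q > q_c$ (not merely near $q_c$), and separately handling the range $q \ge q^*$ where $\phi_1(E_q) \cap \phi_q(E_q) = \emptyset$ and only the first type of coincidence is available, so that the characterization from Lemma \ref{lem:21} must be used in place of Lemma \ref{lem:22}.

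To organize the argument cleanly I would split the proof into the subcase $q_c < q \le q^*$, where the interval attractor $E_q = [0, q/(q-1)]$ and Lemma \ref{lem:22} apply, and the subcase $q > q^*$, where $E_q$ is a Cantor set and Lemma \ref{lem:21} gives the univoque characterization. In each subcase the goal is the same: build admissible $(a_i)$ and $(b_i)$ and verify a one-line identity. The monotonicity of $\Phi(\al(q))$ ensures the construction degenerates exactly at $q = q_c$, which matches the $\B_2 \subseteq (q_c,\infty)$ direction and yields the clean threshold $\B_2 = (q_c,\infty)$.
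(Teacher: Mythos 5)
Your plan coincides with the paper's proof: it too reduces everything through Lemma \ref{lem:34}, settles $\B_2\subseteq(q_c,\f)$ by noting that $\U_q=\set{0,q/(q-1)}$ for $q\le q_c$ and checking that the finitely many candidate identities fail there, and proves $(q_c,\f)\subseteq\B_2$ by splitting at $q^*$, using Lemma \ref{lem:23} together with $\al(q_c)=q_c1^\f$ to obtain $\al(q)>q1^mq0^\f$ and then exhibiting witnesses --- the paper takes $(0q(1^{m+1}q)^\f)_q=(10(1^{m+1}q)^\f)_q$ for $q\in(q_c,q^*]$ via Lemma \ref{lem:22}, and $(0q0^\f)_q=(10^\f)_q$ for $q>q^*$ via Lemma \ref{lem:21}, which are exactly the sequences your sketch calls for. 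One small correction: the threshold $q_c$ is not the root of any of the candidate identities in the first direction (the only solvable one, $(1q^\f)_q=(q0^\f)_q$, gives $q^*$); the cubic enters only through Theorem \ref{th:12}'s description of $\U_q$, so that step is even more routine than your remark suggests.
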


\begin{proof}
First we show that $\B_2\subseteq(q_c,\f)$. By Lemma \ref{lem:31} it suffices to prove that  any $q\in(2,q_c]$ is not contained in $\B_2$. Take $q\in(2,q_c]$. By Theorem \ref{th:12} we have  $\U'_q=\set{(0^\f), (q^\f)}$.
Then by Lemma \ref{lem:34} it follows that if $q\in\B_2\cap(2,q_c]$ then $q$ must satisfy one of  the following equations
 \[
 (0q^\f)_q=(10^\f)_q \quad\textrm{or}\quad (1q^\f)_q=(q0^\f)_q.
 \]
 This is impossible since  neither equation has a solution in $(2,q_c]$. Hence, $\B_2\subseteq(q_c, \f)$.

Now we turn to prove $(q_c,\f)\subseteq\B_2$.
By Lemmas \ref{lem:21} and \ref{lem:34},  one can verify that for any $q>q^*$ the number 
\[
x=(0q0^\f)_q=(10^\f)_q
\]
has precisely  two different $q$-expansions. This implies that $(q^*,\f)\subseteq\B_2$.

For $q\in(q_c, q^*]$, one has by (\ref{eq:33}) that $\al(q_c)=q_c1^\f$ and $\al(q^*)=(q^*)^\f$. Then by Lemma \ref{lem:23}  there exists an integer $m\ge 0$  such that 
 \[
\al(q)\succ q1^mq0^\f.
 \]
 Hence, by Lemmas \ref{lem:22} and \ref{lem:34} one can verify that 
 \[
 y=(0q (1^{m+1}q)^\f)_q=(10(1^{m+1}q)^\f)_q
 \]
 has  precisely two different $q$-expansions. So, $(q_c,q^*]\subseteq\B_2$, and   the proof is complete.
\end{proof}

\begin{lemma}\label{lem:36}
$\B_k=(q_c,\f)$ for any $k\ge 3$.
\end{lemma}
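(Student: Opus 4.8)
```latex
The plan is to prove $\B_k=(q_c,\f)$ for every integer $k\ge 3$ by leveraging the case $k=2$ already settled in Lemma~\ref{lem:35}. The inclusion $\B_k\subseteq(q_c,\f)$ is the easy direction: by Theorem~\ref{th:12}, for $q\in(2,q_c]$ we have $\U_q=\set{(0^\f)_q,(q^\f)_q}$, so there are no nontrivial unique expansions to splice together, and by Lemma~\ref{lem:31} we already know $\B_k\subseteq(2,\f)$. A point with $k\ge 3$ expansions would in particular have at least two expansions, forcing $q\in\B_2=(q_c,\f)$; hence $\B_k\subseteq\B_2=(q_c,\f)$ immediately.

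For the reverse inclusion $(q_c,\f)\subseteq\B_k$, the strategy I would use is a \emph{concatenation} or \emph{splicing} construction that amplifies the number of expansions from $2$ to exactly $k$. The key observation is that the witnesses produced in the proof of Lemma~\ref{lem:35} have a self-similar ``switch'' structure: at one coordinate the sequence can branch, and after the branch it returns to a tail lying in $\U_q'$. Concretely, given $q\in(q_c,\f)$, Lemma~\ref{lem:35} supplies $x\in S_q$ with exactly two $q$-expansions, built from a branching block $w$ (e.g.\ $w=0q$ versus $w=10$, or the analogous blocks for $q\in(q_c,q^*]$) followed by a tail $t$ whose two continuations both lie in $\U_q'$. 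My plan is to form a new point whose expansions are forced to choose, at $k-1$ well-separated stages, between a unique-continuation block and the two-way switch block, thereby realizing exactly $k$ distinct expansions. The cleanest implementation is to build a point of the form
\[
z=\bigl(u_1\,u_2\,\cdots\,u_{k-1}\,s\bigr)_q,
\]
where each $u_j$ is either a ``free'' block admitting the binary switch or a ``locked'' block forced to a unique reading, arranged so that precisely one switch is available and it can be resolved in $k$ ways; alternatively, and more robustly, one concatenates $k-1$ copies of the two-way gadget separated by long unique-expansion spacers so that the total expansion count multiplies or adds to give exactly $k$.

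The main obstacle I anticipate is controlling the count \emph{exactly} equal to $k$ rather than merely ``at least $k$'' or ``a continuum.'' Each switch site, if left unconstrained, tends to propagate and interact with neighboring sites through the substitution $10\sim 0q$, which can silently create extra expansions or collapse distinct ones. To pin the count down I would insert spacer blocks drawn from $\U_q'$ that are long enough—using Lemmas~\ref{lem:21} and~\ref{lem:22} to certify uniqueness of the tails via the lexicographic conditions governed by $\al(q)$—so that the switch regions are \emph{lexicographically insulated} from one another and from the spacers. The verification that no unintended branching occurs at the seams amounts to checking the inequalities in~(\ref{eq:21}) (respectively Lemma~\ref{lem:21} for $q>q^*$) for the concatenated sequence, which is routine once the blocks are chosen with enough separation. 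I would treat the two regimes $q\in(q_c,q^*]$ and $q>q^*$ separately only insofar as the uniqueness criterion differs, but the combinatorial skeleton of the argument—one genuine branch amplified by $k-1$ controlled choices—is identical in both.
```
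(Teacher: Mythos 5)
Your high-level skeleton matches the paper's (reduce the upper inclusion to $\B_2$; for the lower inclusion build explicit witnesses from a switch block followed by a tail in $\U_q'$, treating the regimes $q>q^*$ and $q\in(q_c,q^*]$ via Lemmas \ref{lem:21} and \ref{lem:22}), but there are two genuine gaps. First, the step ``a point with $k\ge 3$ expansions has at least two expansions, forcing $q\in\B_2$'' is a non-sequitur: $\B_2$ consists of bases admitting a point with \emph{exactly} two expansions (this reading is forced by Theorem \ref{th:11}, since otherwise $\B_2$ would contain $\B_{2^{\aleph_0}}=(1,\f)$), so ``at least two'' proves nothing. The paper closes this by using disjointness of the union in (\ref{eq:31}) for $q>2$: the expansion tree of a point with exactly $k<\f$ expansions is binary-branching with finitely many branch points, so passing to a deepest branch point produces a word $d_1\cdots d_n$ for which $\phi_{d_1}^{-1}\circ\cdots\circ\phi_{d_n}^{-1}(x)$ has exactly two expansions. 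That argument, or something equivalent, is missing from your write-up.

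Second, and more seriously, the hard direction never commits to a construction yielding \emph{exactly} $k$ expansions---you name this difficulty yourself and then hedge between two mechanisms (``multiplies or adds''), only one of which can work. Concatenating $k-1$ independent two-way switches separated by univoque spacers makes the counts multiply, giving $2^{k-1}$ expansions rather than $k$, so that variant fails outright. What works, and what the paper does, is a \emph{chained} switch: the witness $x_k=(0q^{k-1}t)_q$ with $t=(1q)^\f$ for $q>q^*$ (resp.\ $t=(1^{m+1}q)^\f$ for $q\in(q_c,q^*]$, with $m$ chosen by Lemma \ref{lem:23} so that $\al(q)>q1^mq0^\f$). Here the substitution $0q\sim 10$ can only be applied successively along the prefix, producing precisely the $k$ expansions $1^j0q^{k-1-j}t$, $0\le j\le k-1$, and the count is pinned down by induction: $\phi_0^{-1}(x_{k+1})=(q^kt)_q$ is univoque (one expansion), $\phi_1^{-1}(x_{k+1})=x_k$ has exactly $k$ expansions by hypothesis, and $x_{k+1}\notin\phi_q(E_q)$ by disjointness of (\ref{eq:31}), so $x_{k+1}$ has exactly $k+1$ expansions. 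Your closing phrase ``one genuine branch amplified by $k-1$ controlled choices'' gestures at this structure, but without the explicit chained prefix and the induction, the exact counting---which is the entire content of the lemma---is not established.
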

\begin{proof}
First we prove  $\B_{k}\subseteq\B_2$ for any $k\ge 3$.  By Lemma \ref{lem:31} it follows that $\B_k\subseteq(2,\f)$. 
Take $q\in\B_k$ with $k\ge 3$.  Suppose $x\in E_q$ has exactly $k$ different $q$-expansions. Since $q>2$,  the union in (\ref{eq:31}) is disjoint. This implies that there exists a word $d_1\cdots d_n$ such that 
 \[\phi_{d_1}^{-1}\circ\cdots\circ\phi_{d_n}^{-1}(x)\]
  has exactly two different  $q$-expansions. So, $q\in\B_2$. Hence, $\B_k\subseteq \B_2$ for any $k\ge 3$.

Now we  prove $\B_2\subseteq\B_k$ for any $k\ge 3$.
Note by  Lemma \ref{lem:35} that $\B_2=(q_c, \f)$. Then it suffices  to prove 
$ (q_c,\f)\subseteq\B_k.$
First we prove $(q^*,\f)\subseteq\B_k$. Take $q\in (q^*, \f)$. We claim  that  for any $k\ge 1$,
\[
x_k=(0q^{k-1}(1q)^\f)_q
\]
has  precisely $k$ different $q$-expansions. We will prove this by induction on $k$. 

For $k=1$ one can easily check by using Lemma \ref{lem:21} that $x_1=(0(1q)^\f)_q\in\U_q$. Suppose $x_k$ has exactly $k$ different $q$-expansions. Now we consider $x_{k+1}$, which can be written as
\[
x_{k+1}=(0q^k(1q)^\f)_q=(10q^{k-1}(1q)^\f)_q.
\]
By Lemma \ref{lem:21} we have $q^k(1q)^\f\in\U_{q}'$. Moreover, by the induction hypothesis $(0q^{k-1}(1q)^\f)_q=x_k$ has exactly $k$ different $q$-expansions. Then $x_{k+1}$ has at least  $k+1$ different $q$-expansions. 
On the other hand, since $q>q^*>2$,    the union in (\ref{eq:31}) is disjoint. Then 
\[x_{k+1}\in\phi_0(E_q)\cap\phi_1(E_q)\setminus\phi_q(E_q).
\]
This implies that $x_{k+1}$ indeed has  $k+1$ different $q$-expansions. By induction this proves the claim, and hence $(q^*, \f)\subseteq\B_k$ for all $k\ge 3$.

It remains to prove $(q_c,q^*]\subseteq\B_k$.  Take $q\in(q_c,q^*]$. By (\ref{eq:33}) and Lemma \ref{lem:23} there exists an integer $m\ge 0$ such that 
\begin{equation}\label{eq:34}
\al(q)\succ q1^mq0^\f.
\end{equation}
We claim that 
\[
y_k=(0q^{k-1}(1^{m+1}q)^\f)_q
\]
has  exactly $k$ different $q$-expansions. Again, this will be proven by induction on $k$.

If $k=1$, then by using (\ref{eq:34}) in Lemma \ref{lem:22} it gives that $y_1=(0(1^{m+1}q)^\f)_q$ has a unique $q$-expansion. Suppose $y_k$ has exactly $k$ different $q$-expansions. Now we consider  
\[
y_{k+1}=(0q^k(1^{m+1}q)^\f)_q=(10q^{k-1}(1^{m+1}q)^\f)_q.
\]
By (\ref{eq:34}) and Lemma \ref{lem:22} it yields that $q^{k}(1^{m+1}q)^\f\in\U_{q}'$. Furthermore, by the induction hypothesis   $(0q^{k-1}(1^{m+1}q)^\f)_q=y_k$ has exactly $k$ different $q$-expansions. This implies that $y_{k+1}$ has at least $k+1$ different $q$-expansions.
 On the other hand, note that $q>q_c>2$, and therefore the union in (\ref{eq:31}) is disjoint. So, 
 $
 y_{k+1}\in\phi_0(E_q)\cap\phi_1(E_q)\setminus\phi_q(E_q),
 $
which implies that    $y_{k+1}$  indeed has  $k+1$ different $q$-expansions. By induction this proves the claim, and then $(q_c, q^*]\subseteq\B_k$ for all $k\ge 3$. This completes the proof.
\end{proof}

\begin{proof}[Proof of Theorem \ref{th:11}]
By Lemmas \ref{lem:33}, \ref{lem:35} and \ref{lem:36} it suffices to prove $\B_{2^{\aleph_0}}=(1,\f)$. This can be verified by observing that 
\[
x=((100)^\f)_q\in\U_q^{(2^{\aleph_0})}
\]
for any $q>1$, because by the word substitution $10\sim 0q$ one can show that $x$ indeed has a continuum of different $q$-expansions. 
\end{proof}

\section{Proof of Theorem \ref{th:13}}\label{sec:proof of th 13}
For $q>1$ and $k\in\N$ we recall that $\U_q^{(k)}$ is the set of $x\in[0, q/(q-1)]$ having precisely $k$ different $q$-expansions. In this section we are going to investigate  the Hausdorff dimension of $\U_q^{(k)}$. First we show that $q_c\approx 2.32472$ is  the critical base for $\U_q$.

\begin{lemma}\label{lem:41}
Let $q>1$. Then $\dim_H\U_q>0$ if and only if $q>q_c$.
\end{lemma}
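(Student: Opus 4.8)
The plan is to prove the two implications separately, leaning on Theorem \ref{th:12} for the easy direction and building an explicit positive-dimensional subset of $\U_q$ by hand for the hard one. For the \emph{necessity} — equivalently, that $q\le q_c$ forces $\dim_H\U_q=0$ — I would simply invoke Theorem \ref{th:12}: for $q\in(1,q_c]$ one has $\U_q=\set{0,\,q/(q-1)}$, a two-point set, whose Hausdorff dimension is $0$. For the \emph{sufficiency}, suppose $q>q_c$. When $q\ge q^*$ there is nothing to do, since Theorem \ref{th:12} already gives $\dim_H\U_q=\log q_c/\log q>0$. The real content is the range $q\in(q_c,q^*)$, where Theorem \ref{th:12} only supplies a continuum of points and says nothing about dimension.

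For $q\in(q_c,q^*)$ I would construct a Cantor subset of $\U_q$ of positive dimension. By (\ref{eq:33}) we have $\al(q_c)=q_c1^\infty$, so Lemma \ref{lem:23} gives $\Phi(\al(q))>\Phi(\al(q_c))=21^\infty$ for $q>q_c$; from this one extracts a large integer $m$ with $\al(q)>q1^mq0^\infty$. I then consider the two blocks $u=q1^{m+1}$ and $v=q1^{m+2}$ together with the set
\[
X=\set{w_1w_2w_3\cdots:\ w_j\in\set{u,v}\ \text{for all}\ j}\subseteq\set{0,1,q}^\infty .
\]
The first key step is to check $X\subseteq A_q$, which by Lemma \ref{lem:22} forces every sequence in $X$ to be a unique expansion. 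I would verify this position by position against (\ref{eq:21}): after any digit $q$ the tail begins with $1$ and so exceeds $0q^\infty$; after any $1$ the tail has the form $1^kq\cdots$, hence exceeds $1^\infty$, and it is smaller than $\al(q)$ because it either begins with $1<q$, or begins with $q1^a$ where $a\ge m+1>m$, whence $q1^a\cdots<q1^mq0^\infty<\al(q)$. Since every run of $1$'s has length $m+1$ or $m+2$ and is immediately followed by a $q$, all three conditions in (\ref{eq:21}) hold at every position.

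The second key step is to convert the positive entropy of $X$ into positive Hausdorff dimension of its projection $K:=\set{((d_i))_q:(d_i)\in X}\subseteq\U_q$. Writing $\phi_w=\phi_{w_1}\circ\cdots\circ\phi_{w_\ell}$ for a word $w$ of length $\ell$, the set $K$ is the attractor of the two similarities $\phi_u,\phi_v$ of ratios $q^{-(m+2)}$ and $q^{-(m+3)}$. The decisive observation is that $\phi_u(K)$ and $\phi_v(K)$ are disjoint: a point of the former has an expansion beginning $q1^{m+1}q$, a point of the latter an expansion beginning $q1^{m+1}1$, and since both points lie in the univoque set $\U_q$ they cannot coincide. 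Being disjoint compact sets they are positively separated, so $\set{\phi_u,\phi_v}$ satisfies the strong separation condition. Hence $\dim_H K$ equals the unique $s$ with $q^{-(m+2)s}+q^{-(m+3)s}=1$, which is strictly positive, and therefore $\dim_H\U_q\ge\dim_H K>0$.

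The main obstacle is precisely this sufficiency on $(q_c,q^*)$: one must exhibit a genuinely positive-dimensional subset rather than merely an uncountable one. The two delicate points are the uniform admissibility check $X\subseteq A_q$ — which is exactly what degenerates as $q\downarrow q_c$, since at $q_c$ the inequality $\al(q)>q1^mq0^\infty$ fails for every $m$, matching the collapse $\U_{q_c}=\set{0,q_c/(q_c-1)}$ — and the separation argument, where the unique-expansion property is what upgrades mere uncountability into the strong separation condition and thus into positive dimension.
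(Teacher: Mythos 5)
Your proof is correct and follows essentially the same route as the paper: necessity and the case $q\ge q^*$ come from Theorem \ref{th:12}, and for $q\in(q_c,q^*)$ the inequality $\al(q)>q1^mq0^\infty$ (extracted via (\ref{eq:33}) and Lemma \ref{lem:23}) is used to embed a two-block self-similar subset of $\U_q$ through Lemma \ref{lem:22}, exactly as in the paper. The only differences are technical: the paper takes the equal-length blocks $q1^{m+1}, 1^{m+2}$ (so it must discard sequences ending in $1^\infty$) and simply asserts the open set condition, whereas your blocks $q1^{m+1}, q1^{m+2}$ both begin with $q$, removing the need for any exclusion, and your strong-separation argument via uniqueness of expansions supplies a justification the paper leaves implicit.
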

\begin{proof}
 The necessity follows from Theorem \ref{th:12} (i).  For the sufficiency we take $q\in(q_c,\f)$.  If $q>q^*$, then by Theorem \ref{th:12} (iii) we have
 \[
 \dim_H\U_q=\frac{\log q_c}{\log q}>0.
 \]
So it remains to prove $\dim_H\U_q>0$ for any $q\in(q_c, q^*]$.
 
Take $q\in(q_c, q^*]$.  Recall from (\ref{eq:33}) that $\al(q_c)=q_c1^\f$ and $\al(q^*)=(q^*)^\f$. Then by Lemma \ref{lem:23}  there exists an integer $m\ge 0$ such that 
 $
 \al(q)\succ q 1^mq0^\f.
 $
Whence, by Lemma \ref{lem:22} one can verify that all sequences in
\[
\Delta'_m:=\prod_{i=1}^\f\set{q1^{m+1}, 1^{m+2}}
\]
excluding those ending with $1^\f$  belong to $\U_q'$.  
This  implies that 
\begin{equation}\label{eq:41}
\dim_H\U_q\ge\dim_H\Delta_m(q),
\end{equation}
where $\Delta_m(q):=\set{((d_i))_q: (d_i)\in\Delta_m'}$.
Note that $\Delta_m(q)$ is a self-similar set generated by the IFS
\[
f_1(x)=\frac{x}{q^{m+2}}+(q1^{m+1}0^\f)_q,\quad f_2(x)=\frac{x}{q^{m+2}}+(1^{m+2}0^\f)_q,
\]
which satisfies the open set condition (cf.~\cite{Falconer_1990}). Therefore, by (\ref{eq:41}) we conclude that
\[
\dim_H\U_q\ge\dim_H\Delta_m(q)=\frac{\log 2}{(m+2)\log q}>0.\qedhere
\]
\end{proof}

In the following we will consider the Hausdorff dimension of $\U_q^{(k)}$ for any $k\ge 2$, and prove $\dim_H\U_q^{(k)}=\dim_H\U_q$. The upper bound   of $\dim_H\U_q^{(k)}$ is easy.
\begin{lemma}\label{lem:42}
Let $q>1$. Then $\dim_H\U_q^{(k)}\le\dim_H\U_q$ for any $k\ge 2$.
\end{lemma}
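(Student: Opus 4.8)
The goal is to bound $\dim_H \U_q^{(k)}$ from above by $\dim_H \U_q$ for any $k \geq 2$. The key structural fact I would exploit is that, because $q > 2$ forces the union in \eqref{eq:31} to be disjoint (for $k \geq 2$ we have $\B_k \subseteq (q_c,\f) \subseteq (2,\f)$), every point with finitely many expansions ``branches'' only finitely often. More precisely, if $x \in \U_q^{(k)}$ with $k \geq 2$ finite, then $x$ has at least two expansions, so $x$ lies in the switch region $S_q$; applying the appropriate inverse branch $\phi_{d}^{-1}$ strictly reduces the expansion count, and after finitely many steps one reaches a point with a unique expansion, i.e.\ a point in $\U_q$.

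Concretely, the plan is to construct a countable family of maps covering $\U_q^{(k)}$ by images of $\U_q$ under similarity maps. First I would argue that for each $x \in \U_q^{(k)}$ there is a shortest finite word $w = d_1\cdots d_n$ over $\set{0,1,q}$ such that the shifted/rescaled point $\phi_{d_1}^{-1}\circ\cdots\circ\phi_{d_n}^{-1}(x)$ lies in $\U_q$: starting from any expansion of $x$, at each stage the current tail sits in $S_q$ precisely when the expansion count exceeds one, and since the branches are disjoint, peeling off one digit reduces the finite count by at least one until it drops to $1$. This realizes $x = \phi_{d_1}\circ\cdots\circ\phi_{d_n}(y)$ with $y \in \U_q$, where $\phi_{d_1}\circ\cdots\circ\phi_{d_n}$ is a similarity of ratio $q^{-n}$. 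Hence
\[
\U_q^{(k)} \subseteq \bigcup_{n\ge 0}\ \bigcup_{w \in \set{0,1,q}^n} \phi_w(\U_q),
\]
where $\phi_w := \phi_{d_1}\circ\cdots\circ\phi_{d_n}$.

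From here the dimension bound is routine. The right-hand side is a countable union, and each $\phi_w$ is a similarity (in particular bi-Lipschitz), so $\dim_H \phi_w(\U_q) = \dim_H \U_q$. Since Hausdorff dimension is stable under countable unions, $\dim_H \U_q^{(k)} \le \sup_{n,w}\dim_H \phi_w(\U_q) = \dim_H \U_q$, which is the claim. I would write the covering inclusion as the central lemma and then invoke countable stability of $\dim_H$.

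The main obstacle, and the step requiring genuine care rather than bookkeeping, is justifying that the branching process \emph{terminates} in finitely many steps and lands exactly in $\U_q$. One must verify that whenever the tail $x_n$ has more than one expansion it genuinely belongs to $S_q$ so that an admissible inverse branch exists, and that disjointness of the union in \eqref{eq:31} guarantees the chosen inverse branch is unique and strictly decreases the (finite) number of expansions; this is where finiteness of $k$ is essential, since for $k = 2^{\aleph_0}$ the count need never decrease. I would also need to confirm that the terminal point has a \emph{unique} expansion rather than merely finitely many, which follows because the process only stops when no tail lies in $S_q$, i.e.\ when the expansion is forced digit-by-digit.
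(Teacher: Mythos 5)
Your proof is correct and takes essentially the same route as the paper: the paper's entire proof consists of asserting the covering inclusion $\U_q^{(k)}\subseteq\bigcup_{n=1}^\f\bigcup_{d_1\cdots d_n\in\set{0,1,q}^n}\phi_{d_1}\circ\cdots\circ\phi_{d_n}(\U_q)$ as an observation and implicitly invoking countable stability of Hausdorff dimension together with the fact that each $\phi_{d_1}\circ\cdots\circ\phi_{d_n}$ is a similarity, exactly as you do. Your branching/termination argument supplies the justification the paper omits, with one small inaccuracy worth fixing: a point with more than one expansion need not itself lie in $S_q$ (its expansions may agree on an initial block), it only has some deeper tail in $S_q$, but this does not affect the termination of your peeling process or the conclusion.
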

\begin{proof}
Recall that $\phi_d(x)=(x+d)/q$ for $d\in\set{0,1,q}$. Then   the lemma follows  by observing  that for any $k\ge 2$,
\[
\U_q^{(k)}\subseteq\bigcup_{n=1}^\f\bigcup_{d_1\cdots d_n\in\set{0,1,q}^n}\phi_{d_1}\circ\cdots\circ\phi_{d_n}(\U_q),\]
and the countable stability of Hausdorff dimension.
\end{proof}

\medskip

For the lower bound of $\dim_H\U_q^{(k)}$ we need more.  By Lemmas \ref{lem:41} and \ref{lem:42} it follows that 
\[\dim_H\U_q^{(k)}=0=\dim_H\U_q\quad\textrm{for any }q\le q_c.\]
 So,   it suffices to consider $q>q_c$.
Let 
\[
F_q'(1):=\set{(d_i)\in\U_q': d_1=1}
\]
be the \emph{follower set} in $\U_q'$ generated by the word $1$,
and let $F_q(1)$ be the set of $x\in E_q$ which have a $q$-expansion in $F_q'(1)$, i.e., 
$F_q(1)=\set{((d_i))_q: (d_i)\in F_q'(1)}.$

\begin{lemma}\label{lem:43}
Let $q>q_c$. Then $\dim_H\U_q^{(k)}\ge \dim_H F_q(1)$ for any $k\ge 1$.
\end{lemma}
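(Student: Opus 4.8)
The plan is to exhibit an explicit similarity map sending $F_q(1)$ into $\U_q^{(k)}$; since a similarity is bi-Lipschitz and hence preserves Hausdorff dimension, the inequality $\dim_H\U_q^{(k)}\ge\dim_H F_q(1)$ follows at once from the resulting inclusion of images. Throughout I fix $q>q_c$, so in particular $q>2$, and I use the discussion after (\ref{eq:31}) that the union defining the switch region is disjoint and that $\phi_0(E_q)\cap\phi_q(E_q)=\emptyset$. The case $k=1$ is immediate, since $F_q(1)\subseteq\U_q=\U_q^{(1)}$ by definition, so fix $k\ge 2$.

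Recall the identity $\phi_1\circ\phi_0=\phi_0\circ\phi_q$ underlying the substitution $10\sim 0q$. Define
\[
\Psi:=\phi_0\circ\phi_q^{k-1}=\phi_1^{k-1}\circ\phi_0,
\]
so that for $x=(1(e_i))_q\in F_q(1)$, with $1(e_i)\in F_q'(1)\subseteq\U_q'$, one has $\Psi(x)=\big(0q^{k-1}1(e_i)\big)_q$. As a composition of $k$ maps of the form $\phi_d$, each a similarity of ratio $1/q$, the map $\Psi$ is an injective similarity of ratio $q^{-k}$, whence $\dim_H\Psi(F_q(1))=\dim_H F_q(1)$. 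It therefore suffices to prove $\Psi(F_q(1))\subseteq\U_q^{(k)}$, that is, that $\Psi(x)$ has \emph{exactly} $k$ different $q$-expansions.

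I would establish this by induction on the length of the prepended block. Set $z_j:=\big(0q^{j-1}1(e_i)\big)_q=\phi_1^{j-1}\circ\phi_0(x)$ for $1\le j\le k$, so that $z_k=\Psi(x)$, and show $|\Sigma(z_j)|=j$. For the base case, $z_1=(01(e_i))_q=\phi_0(x)$ has a unique expansion: it lies in $\phi_0(E_q)$ and hence (as $q>2$) cannot begin with $q$, while $\phi_1^{-1}(z_1)=(1(e_i))_q-1<0$ shows it cannot begin with $1$, using $(1(e_i))_q<1$ (which holds by the crude bound $(1(e_i))_q\le(2q-1)/(q(q-1))<1$ when $q>q^*$, and by $(e_i)<\al(q)$ from Lemma \ref{lem:22} when $q\in(q_c,q^*]$). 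For the inductive step write
\[
z_{j+1}=\big(0q^{j}1(e_i)\big)_q=\big(10q^{j-1}1(e_i)\big)_q=\phi_1(z_j),
\]
so $z_{j+1}\in\phi_0(E_q)\cap\phi_1(E_q)$ and, by disjointness, $z_{j+1}\notin\phi_q(E_q)$. Hence every $q$-expansion of $z_{j+1}$ begins with $0$ or $1$, giving
\[
|\Sigma(z_{j+1})|=\big|\Sigma\big((q^{j}1(e_i))_q\big)\big|+|\Sigma(z_j)|.
\]
Together with the hypothesis $|\Sigma(z_j)|=j$ and the claim that $(q^{j}1(e_i))_q$ has a unique $q$-expansion, this yields $|\Sigma(z_{j+1})|=j+1$, completing the induction and hence $|\Sigma(z_k)|=k$.

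The main obstacle is precisely the claim that $q^{j}1(e_i)\in\U_q'$ for every $j\ge1$, proved uniformly for $q>q_c$. The point $(q^{j}1(e_i))_q$ lies at or above $(q0^\f)_q=1$, which already exceeds $\sup\phi_0(E_q)=1/(q-1)$ for $q>2$, so no expansion can begin with $0$; the delicate part is ruling out a leading digit $1$, i.e.\ showing $(q^{j}1(e_i))_q\notin\phi_1(E_q)$. Since $\phi_1(E_q)\subseteq[1/q,(2q-1)/(q(q-1))]$, it suffices to compare the tail value following the leading $q$ against the threshold $(-q^2+3q-1)/(q-1)$: for $j\ge2$ this tail is $\ge1$, while for $j=1$ it equals $(1(e_i))_q>1/(q-1)$, and in both cases the strict inequality $>(-q^2+3q-1)/(q-1)$ holds for $q>2$. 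As uniqueness is closed under passing to suffixes and $1(e_i)\in\U_q'$, this forcing of each leading $q$ shows $q^{j}1(e_i)\in\U_q'$. For $q>q^*$ all these comparisons reduce to Lemma \ref{lem:21}; the genuinely subtle regime is $q\in(q_c,q^*]$, where $E_q$ is an interval riddled with overlaps and $\U_q'$ is only sandwiched between $A_q$ and $B_q$ by Lemma \ref{lem:22}, so that it is this forcing/suffix argument — rather than direct membership in $A_q$ — that makes the count come out to exactly $k$.
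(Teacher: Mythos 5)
Your argument is correct and is essentially the paper's own proof: the paper likewise maps $F_q(1)$ into $\U_q^{(k)}$ by the similarity $\phi_0\circ\phi_q^{k-1}$ and shows by induction that each point $(0q^{j-1}1(e_i))_q$ has exactly $j$ different $q$-expansions, using the disjointness of the union in (\ref{eq:31}) for $q>2$ together with the membership $q^j1(e_i)\in\U_q'$, so your forcing/threshold argument merely makes explicit what the paper covers by citing Lemmas \ref{lem:21} and \ref{lem:22}. One caveat: for $q\in(q_c,q^*]$ you deduce $(1(e_i))_q<1$ from the lexicographic inequality $(e_i)<\al(q)$ alone, which is not a valid inference for arbitrary sequences over this overlapping alphabet (lexicographic order need not respect numerical order here); the fact itself is nevertheless true and easy to repair, e.g.\ because $E_q=[0,q/(q-1)]$ is an interval for $q\le q^*$, so $(1(e_i))_q\ge 1$ would force $(1(e_i))_q\in\phi_q(E_q)$ and hence yield a second expansion beginning with the digit $q$, contradicting $1(e_i)\in\U_q'$.
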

\begin{proof}
  For $k\ge 1$ and $q>q_c$ let 
\[
\La^k_q:=\set{((d_i))_q: d_1\cdots d_k=0q^{k-1}, (d_{k+i})\in F'_q(1)}.
\]
Then $\La^k_q=\phi_0\circ\phi_q^{k-1}(F_q(1))$, and therefore 
$\dim_H\La^k_q=\dim_H F_q(1).$
So it suffices to prove $\La^k_q\subseteq\U_q^{(k)}$. 
Arbitrarily take
 \[
x_k=(0q^{k-1}(c_i))_q\in\La^k_q\quad\textrm{ with}\quad (c_i)\in F'_q(1).
\] 
We will prove by induction on $k$ that $x_k$   has exactly  $k$ different $q$-expansions. 

For $k=1$,  by Lemmas \ref{lem:21} and $\ref{lem:22}$ it follows that 
$
x_1=(0(c_i))_q\in\U_q.
$
Suppose $x_k=(0q^{k-1}(c_i))_q$ has precisely  $k$ different $q$-expansions. Now we consider $x_{k+1}$, which can be expanded as
\[
x_{k+1}=(0q^k(c_i))_q=(10q^{k-1}(c_i))_q.
\]
By Lemmas \ref{lem:21} and \ref{lem:22} we have $q^k(c_i)\in\U'_q$, and by the induction hypothesis it yields that $(0q^{k-1}(c_i))_q=x_k$ has $k$ different $q$-expansions. This implies that $x_{k+1}$ has at least $k+1$
 different $q$-expansions. 
 On the other hand, since $q>q_c>2$, it gives that the union in (\ref{eq:31}) is disjoint. So,
 $x_{k+1}\in\phi_0(E_q)\cap\phi_1(E_q)\setminus\phi_q(E_q),$
  which  implies that  $x_{k+1}$ indeed has $k+1$ different $q$-expansions.
  
   By induction this proves $x_k\in\U_q^{(k)}$ for all $k\ge 1$. Since $x_k$ was taken arbitrarily  from $\La_q^k$,   we conclude that $\La_q^k\subseteq\U_q^{(k)}$ for any $k\ge 1$. The proof is complete.
\end{proof}
\begin{lemma}\label{lem:44}
Let $q>q_c$. Then $\dim_H F_q(1)\ge \dim_H\U_q$.
\end{lemma}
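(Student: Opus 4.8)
The plan is to prove the reverse of the inclusion exploited in Lemma \ref{lem:42}: instead of covering $\U_q^{(k)}$ by affine copies of $\U_q$, I will cover $\U_q$ itself by \emph{countably many} affine copies of $F_q(1)$, which forces $\dim_H\U_q\le\dim_H F_q(1)$. The mechanism is that a unique expansion can never contain the block $0q$ (this is precisely the switch relation $0q\sim 10$), so the digit $1$ must appear unless the expansion is one of countably many trivial sequences.

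First I would record the \emph{no-$0q$ property}: if $(d_i)\in\U_q'$ then there is no $n$ with $d_n=0$ and $d_{n+1}=q$. For $q>q^*$ this is immediate from Lemma \ref{lem:21}: when $d_n=0$ the tail satisfies $(d_{n+i})<q0^\f$, whereas $0q\cdots$ forces the tail to begin with $q$, hence to be $\ge q0^\f$. For $q_c<q\le q^*$ the same follows from $\U_q'\subseteq B_q$ in Lemma \ref{lem:22}, since after a $0$ the tail must be $<1\al(q)$, and $1\al(q)$ begins with the digit $1<q$. Consequently any unique expansion omitting the digit $1$ lies in $\{0,q\}^\f$, and by the no-$0q$ property every $0$ is followed only by $0$'s; thus it has the form $q^k0^\f$ for some $k\ge 0$ (with $q^\f$ allowed). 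Hence the set $N_q$ of points of $\U_q$ whose unique expansion avoids the digit $1$ is countable, so $\dim_H N_q=0$.

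Next I would treat the unique expansions that do contain a $1$. If $(d_i)\in\U_q'$ has its first $1$ at position $n$, then $d_1\cdots d_{n-1}\in\{0,q\}^{n-1}$, and the tail $(d_{n+i-1})_{i\ge1}$ is again a unique expansion (any alternative expansion of the tail would lift, after prepending $d_1\cdots d_{n-1}$, to an alternative expansion of $x$), so it belongs to $F_q'(1)$. Writing $x=((d_i))_q$ and $y=(d_nd_{n+1}\cdots)_q\in F_q(1)$, the identity $\phi_a(z)=(z+a)/q$ gives $x=\phi_{d_1}\circ\cdots\circ\phi_{d_{n-1}}(y)$, an image of $F_q(1)$ under a similarity of ratio $q^{-(n-1)}$. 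Collecting all cases,
\[
\U_q\subseteq N_q\cup\bigcup_{n=1}^{\f}\bigcup_{w\in\{0,q\}^{n-1}}\phi_w(F_q(1)),\qquad \phi_w:=\phi_{w_1}\circ\cdots\circ\phi_{w_{n-1}},
\]
where for $n=1$ the inner map is the identity and contributes $F_q(1)$ itself.

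Finally, each $\phi_w$ is a similarity, so $\dim_H\phi_w(F_q(1))=\dim_H F_q(1)$; since Hausdorff dimension is stable under countable unions and $\dim_H N_q=0$, the displayed inclusion yields $\dim_H\U_q\le\dim_H F_q(1)$, which is the assertion. The one delicate point — the step I expect to need the most care — is the uniform verification of the no-$0q$ property across the two regimes $q>q^*$ and $q_c<q\le q^*$, since this is exactly what rules out $1$-free expansions and collapses $N_q$ to a countable set; once that is in place, the remainder is the routine dimension bookkeeping already employed in Lemma \ref{lem:42}.
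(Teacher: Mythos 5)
Your proof is correct, and for the regime $q\in(q_c,q^*]$ it is essentially the paper's own argument: there the paper also covers $\U_q'$ by the countably many cylinder sets $q^k0^m F_q'(1)$ together with the countably many $1$-free sequences $q^k0^\f$, which is exactly your decomposition (the paper's sharper prefix form $q^k0^m$ versus your $\set{0,q}^{n-1}$ is immaterial, countability being all that is used). The genuine divergence is at $q>q^*$: instead of a covering argument, the paper embeds $\U_q'$ into the irreducible subshift of finite type $X_A'$ with adjacency matrix (\ref{eq:42}), observes that $X_A'\setminus\U_q'$ is at most countable, and invokes the graph-directed construction with the open set condition (Mauldin--Williams \cite{Mauldin_Williams_1988}) to conclude $\dim_H F_q(1)=\dim_H X_A(q)=\dim_H\U_q$. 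Your argument treats both regimes uniformly and more elementarily, needing only the countable stability of Hausdorff dimension and the fact that each $\phi_w$ is a similarity, and it bypasses irreducibility and the open set condition entirely; what the paper's heavier route buys is the identification of $\dim_H\U_q$ with the dimension of the graph-directed set $X_A(q)$, which is what underlies the closed formula $\log q_c/\log q$ of Theorem \ref{th:12}, but for the inequality asserted in this lemma your covering suffices. One further simplification available to you: the no-$0q$ property holds for every $q>1$ directly from the substitution $0q\sim 10$ (any expansion containing the block $0q$ is automatically non-unique), so the careful two-case verification via Lemmas \ref{lem:21} and \ref{lem:22}, though correct, could be replaced by that one-line observation.
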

\begin{proof}
First we consider  $q>q^*$. By Lemma \ref{lem:21} one can show that $\U'_q$ is contained in  an irreducible sub-shift of finite type $X'_A$ over the states $\set{0,1,q}$ with adjacency matrix 
\begin{equation}\label{eq:42}
A=\left(
\begin{array}{ccc}
1&1&0\\
0&1&1\\
1&1&1
\end{array}
\right).
\end{equation}
Moreover, the complement set $X'_A\setminus\U_q'$ contains all sequences ending with $1^\f$. This implies that 
\begin{equation}\label{eq:43}
\dim_H\U_q=\dim_H X_A(q),
\end{equation}
where $X_A(q):=\set{((d_i))_q: (d_i)\in X_A'}$. Note that $X_A(q)$ is a graph-directed set satisfying the open set condition (cf.~\cite[Theorem 3.4]{Zou_Lu_Li_2012}), and the sub-shift of  finite type   $X_A'$ is irreducible. Then by (\ref{eq:43}) it follows that 
\[
\dim_H\U_q=\dim_H X_A(q)=\dim_H F_q(1).
\]

Now we consider $q\in(q_c,q^*]$. By  Lemma \ref{lem:22} it follows that  
\[
\begin{split}
\U_q'\subseteq\set{q^\f}\cup \bigcup_{k=0}^\f\set{q^k 0^\f} \cup\bigcup_{k=0}^\f\bigcup_{m=0}^\f\set{q^k 0^m F_q'(1)},
\end{split}
\]
where 
\[q^k 0^m F_q'(1):=\set{(d_i): d_1\cdots d_{k+m}=q^k0^m,  (d_{k+m+i})\in F_q'(1)}.\]
This implies that 
$
\dim_H\U_q\le \dim_H F_q(1).
$
\end{proof}

\begin{proof}[Proof of Theorem \ref{th:13}]
The theorem follows directly by Lemmas \ref{lem:41}--\ref{lem:44}.
\end{proof}

\section{Proof of Theorem \ref{th:14} }\label{sec:proof of th 14}

In this section we will consider the set  $\U_q^{(\aleph_0)}$ which consists of all $x\in E_q$ having countably infinitely many $q$-expansions. 
\begin{lemma}
\label{lem:51}
For any $q\in\B_{\aleph_0}$ the set $\U_q^{(\aleph_0)}$ contains infinitely many points.
\end{lemma}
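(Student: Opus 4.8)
The plan is to produce a single $\phi_0$-orbit that stays inside the region where the leading digit is rigidly forced, anchored at a point already known to carry countably many expansions. Since $\B_{\aleph_0}=[2,\f)$ by Lemma~\ref{lem:33}, I may assume $q\ge 2$ throughout, which is exactly the regime where the switch region no longer fills up the whole of $E_q$.

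The first step is a \emph{forcing} observation: if $z\in E_q$ satisfies $0<z<1/q$, then every $q$-expansion of $z$ must begin with the digit $0$. Indeed, $\phi_1(E_q)\subseteq[1/q,\f)$ and $\phi_q(E_q)\subseteq[1,\f)$, so $z<1/q$ rules out any expansion starting with $1$ or $q$. Consequently, deleting the leading $0$ is a bijection between $\Sigma(z)$ and $\Sigma(qz)$, and hence $|\Sigma(z)|=|\Sigma(qz)|$.

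Next I would fix a base point. Set $w=(0^3q^\f)_q=\frac{1}{q^2(q-1)}$, which satisfies $0<w<1/q$ for every $q\ge 2$ (equivalently $q(q-1)>1$). Applying the forcing observation once or twice reduces $|\Sigma(w)|$ to the cardinality of a point already known to have countably infinitely many expansions: for $q>2$ the chain halts at $(0q^\f)_q=\tfrac{1}{q-1}$, the $q$-null infinite point from the proof of Lemma~\ref{lem:33}, while for $q=2$ it halts at $\tfrac12$, whose count one checks directly (using $2\in\B_{\aleph_0}$ and Lemma~\ref{lem:31}) to be $\aleph_0$. Either way $|\Sigma(w)|=\aleph_0$.

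Finally, consider the points $x_n=\phi_0^n(w)=w/q^n$ for $n\ge 0$. They lie in $E_q$, and they are pairwise distinct since $w>0$ makes $w/q^n$ strictly decreasing. Because $w<1/q$, we have $x_n\le w<1/q$ for all $n\ge 0$, so the forcing observation applies at every level and yields
\[
|\Sigma(x_n)|=|\Sigma(x_{n-1})|=\cdots=|\Sigma(x_0)|=|\Sigma(w)|=\aleph_0 .
\]
Thus $\set{x_n:n\ge 0}$ is an infinite subset of $\U_q^{(\aleph_0)}$, proving the lemma. The one delicate point is the upper bound---guaranteeing these points have only countably many, rather than a continuum of, expansions; this is precisely what is secured by keeping the whole orbit inside the forced region $(0,1/q)$, where each successive digit is determined until one reaches the anchor $w$. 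The minor case split at $q=2$ versus $q>2$ reflects only which concrete anchor the forcing chain terminates on.
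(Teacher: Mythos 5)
Your strategy is essentially the one in the paper: your points $x_n=\phi_0^n(w)=(0^{n+3}q^\f)_q$ are the paper's $z_k=(0^kq^\f)_q$ with $k\ge 3$, and your forcing observation (a point of $E_q\cap(0,1/q)$ has all expansions beginning with $0$, so deleting the leading $0$ is a bijection from $\Sigma(z)$ onto $\Sigma(qz)$) is an explicit form of the paper's remark that $z_k=\phi_0^{k-1}(z_1)\notin S_q$ for $k\ge 2$. For $q>2$ your argument is complete and, if anything, cleaner than the paper's: the chain ends at $(0q^\f)_q$, shown in the proof of Lemma \ref{lem:33} to be a $q$-null infinite point and hence to have exactly $\aleph_0$ expansions; moreover your counting bijection sidesteps the formal awkwardness that the definition of a $q$-null infinite point requires membership in $S_q$, which $z_k$ for $k\ge 2$ does not satisfy.

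The gap is at $q=2$. You anchor the chain at $1/2$ and assert $|\Sigma(1/2)|=\aleph_0$ ``by a direct check (using $2\in\B_{\aleph_0}$ and Lemma \ref{lem:31})'', but those two facts cannot deliver that conclusion: Lemma \ref{lem:31} only restricts $|\Sigma(1/2)|$ to $\set{1,\aleph_0,2^{\aleph_0}}$, and $2\in\B_{\aleph_0}$ says that \emph{some} point of $E_2$ has $\aleph_0$ expansions, not that $1/2$ does. Exhibiting two expansions is immediate ($1/2=(10^\f)_2=(01^\f)_2$), but the real content is excluding a continuum, and that requires an enumeration: every expansion of $1/2$ is $10^\f$ or is $0$ followed by an expansion of $1=(02^\f)_2$, and every expansion of $1$ has the form $1^\f$, $1^m20^\f$, or $1^m02^\f$ with $m\ge 0$ --- a countable list. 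This is exactly the enumeration the paper carries out in its $q=2$ case (it lists all $q$-expansions of $z_k$ explicitly); once you insert it, your proof is complete.
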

\begin{proof}
Let $q\in\B_{\aleph_0}$. By Theorem \ref{th:11} we have $q\in[2,\f)$. Then it suffices to show that for any $k\ge 1$,
\[
z_k:=(0^kq^\f)_q
\]
is a $q$-null infinite points, and thus  $z_k\in\U_q^{(\aleph_0)}$.

If $q>2$, then by the proof
of Lemma \ref{lem:33} it yields that $z_1=(0q^\f)_q$ is a $q$-null infinite point.
Moreover, note that 
$
z_k=\phi_0^{k-1}(z_1)\notin S_q
$
for any $k\ge 2$.
This implies that all of these points $z_k, k\ge 1$, are $q$-null infinite points. So,   
$
\set{ z_k: k\ge 1}\subseteq\U_q^{(\aleph_0)}.
$

If $q=2$, then by using the substitutions
\[
0q\sim 10,\quad 0q^\f=1^\f=q0^\f,
\]
one can also show that $z_k$ is a $q$-null infinite point.  In fact,
  all of the $q$-expansions of $z_k=(0^kq^\f)_q$ are of the form
\[
0^kq^\f,\quad 0^{k-1}1^\f,\quad 0^{k-1}1^m0q^\f\quad\textrm{and}\quad 0^{k-1}1^{m-1}q0^\f, 
\]
where $m\ge 1$.
Therefore, $z_k\in\U_q^{(\aleph_0)}$ for any $k\ge 1$. 
\end{proof}

By Lemma \ref{lem:51} it follows that $\U_q^{(\aleph_0)}$ is at least countably infinite for any $q\in\B_{\aleph_0}=[2,\f)$. In the following lemma we show that $\U_q^{(\aleph_0)}$ is indeed countably infinite if $q\ge q^*$.
 
\begin{lemma}\label{lem:52}
Let $q\ge q^*$. Then  $\U_q^{(\aleph_0)}$ is at most countable.
\end{lemma}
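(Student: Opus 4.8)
The goal is to show that for $q \geq q^*$, the set $\U_q^{(\aleph_0)}$ of points with countably infinitely many $q$-expansions is at most countable. The plan is to exploit the fact that for $q \geq q^*$ the union in (\ref{eq:31}) is disjoint and, crucially, that $\phi_1(E_q) \cap \phi_q(E_q) = \emptyset$, so the switch region reduces to $S_q = \phi_0(E_q) \cap \phi_1(E_q)$. Thus every branching of expansions can only occur through the single overlap between the digit-$0$ and digit-$1$ branches. The strategy is to analyze how a point can accumulate infinitely many expansions and show that this forces the point to lie in a countable set.

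First I would set up the branching structure. Given $x \in \U_q^{(\aleph_0)}$ with an expansion $(d_i)$, branching happens precisely at those positions $n$ where the tail $x_n := (d_{n+1} d_{n+2} \cdots)_q$ lands in $S_q = \phi_0(E_q) \cap \phi_1(E_q)$. Since the only overlap is between branches $0$ and $1$, at such a position the expansion either continues with digit $0$ or digit $1$. I would track the relationship to $q$-null infinite points: to have only countably many expansions (rather than a continuum), the set of branching positions along any fixed expansion must be controlled so that infinitely many independent binary choices cannot be made. The key combinatorial fact is that if along some expansion there are infinitely many positions where \emph{both} continuations lead to tails that themselves admit further branching, one obtains a full binary tree of choices and hence a continuum — contradicting membership in $\U_q^{(\aleph_0)}$. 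Therefore, for $x \in \U_q^{(\aleph_0)}$, after finitely many steps every branch must eventually become "essentially unique," i.e., funnel into tails having a unique expansion except for a controlled countable set of re-entries into $S_q$.

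Next I would make this precise by identifying the structure of $S_q$ itself. For $q \geq q^*$ the intersection $\phi_0(E_q) \cap \phi_1(E_q)$ is a similar copy of a small piece of $E_q$, and a point in it has expansions governed by the characterization in Lemma \ref{lem:21}. I would argue that a point whose orbit returns to $S_q$ infinitely often along infinitely many distinct branches must have a continuum of expansions, so for countability we need the orbit to visit $S_q$ only finitely often along all but countably many branches. Concretely, I would show that the tails that can recur in $S_q$ while keeping the total count countable are forced into a specific form — analogous to the explicit expansions $0^{k-1}1^m 0 q^\f$ appearing in the proof of Lemma \ref{lem:51} — leaving only countably many possible values for $x$. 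The clean way to organize this is: each $x \in \U_q^{(\aleph_0)}$ corresponds (via removing an initial block and applying the inverse maps $\phi_{d_i}^{-1}$) to a tail lying in a canonical countable set of "infinitely-branching but not continuum-branching" sequences, and pulling back through the finitely many prefix choices keeps the total countable.

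The main obstacle I expect is ruling out the continuum case sharply: I must prove that whenever the branching set is infinite in a "two-sided" way (infinitely many positions where genuinely distinct infinite continuations both survive), the cardinality jumps to $2^{\aleph_0}$, and conversely pin down exactly which infinite branching patterns yield only $\aleph_0$ expansions. This is a tree-counting argument: a point has a continuum of expansions if and only if its expansion tree contains a full binary subtree, while it has countably many if and only if the tree is infinite but contains no such subtree — equivalently, only finitely many nodes have two infinite-subtree children. I would formalize this via König-type / Cantor–Bendixson reasoning on the tree of expansions, using the disjointness of (\ref{eq:31}) to guarantee that each node has a well-defined branching behavior determined by whether its tail lies in $S_q$ and whether both resulting tails stay in $E_q$. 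Once the tree dichotomy is established, countability of $\U_q^{(\aleph_0)}$ follows because there are only countably many sequences admitting an infinite tree with finitely many double-branch nodes, and hence only countably many corresponding real numbers $x$.
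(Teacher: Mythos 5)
Your overall shape --- reduce to showing that every $x\in\U_q^{(\aleph_0)}$ has an expansion whose tail lies in a canonical countable set, then pull back through countably many prefixes --- matches the paper's strategy, but the two steps that carry all the weight are not proved, and one rests on a false claim. Your ``key combinatorial fact'' (infinitely many positions where both continuations admit further branching forces a full binary subtree, hence a continuum) is false as a statement about trees: take an infinite spine and attach, at infinitely many of its nodes, a side branch that splits exactly once more into two non-splitting infinite rays; then infinitely many nodes have both continuations admitting further branching, yet there are only countably many paths. Your later ``equivalence'' fails in the other direction too: a finitely branching tree with only finitely many nodes having two infinite-subtree children has only \emph{finitely} many infinite paths, so a point with exactly $\aleph_0$ expansions necessarily has infinitely many such nodes. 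What makes the implication true in this specific setting is a locality property your argument never isolates: for $q\ge q^*$, if a tail $x_n$ lies in $S_q=\phi_0(E_q)\cap\phi_1(E_q)$, its two continuations must read $0q\cdots$ and $10\cdots$, and after these two digits they lead to the \emph{same} value; equivalently, the swap $0q\sim 10$ changes only two digits and leaves the rest of the sequence intact. This is exactly how the paper argues: if $0q$ or $10$ occurs infinitely often in an expansion of $x$, flipping disjoint occurrences independently produces $2^{\aleph_0}$ expansions, so any $x\in\U_q^{(\aleph_0)}$ has an expansion that eventually avoids the words $0q$ and $10$, i.e., ends in the subshift $X_A'$ with adjacency matrix (\ref{eq:42}).

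Second, your concluding count --- ``there are only countably many sequences admitting an infinite tree with finitely many double-branch nodes'' --- is asserted, not proved; besides resting on the wrong characterization above, it is essentially the statement of the lemma, so the argument is circular at this point. The paper closes this gap concretely: the chosen expansion cannot end in $\U_q'$ (infinitely many of its tails have infinitely many expansions), hence it must end in $X_A'\setminus\U_q'$, and this set is at most countable by \cite[Theorem 3.4]{Zou_Lu_Li_2012}; countably many prefixes times a countable set of tails gives countability of $\U_q^{(\aleph_0)}$. Without identifying such a concrete countable set of tails and a proof (or citation) of its countability, your proposal does not close. A minor further point: at $q=q^*$ the intersection $\phi_1(E_{q^*})\cap\phi_q(E_{q^*})$ is the single point $\set{1}$ rather than empty, so your reduction of $S_q$ to the $0$--$1$ overlap needs a remark at that endpoint.
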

\begin{proof}
Let $x\in\U_q^{(\aleph_0)}$. Then $x$ has a $q$-expansion $(d_i)$ such that 
\[
\left|\Sigma(x_n)\right|=\f\quad\textrm{for infinitely many }n\in\N,
\]
where $x_n:=((d_{n+i}))_q$. This implies that $(d_i)$ can not end in $\U_q'$.

Note by the proof of Lemma \ref{lem:44} that $\U_q'\subseteq X_A'$, where $X_A'$ is a sub-shift of finite type over the state $\set{0,1,q}$ with adjacency matrix $A$
defined in (\ref{eq:42}). Moreover, $X_A'\setminus \U_q'$ is at most countable (cf.~\cite[Theorem 3.4]{Zou_Lu_Li_2012}). Note that the expansion $(d_i)$ of $x\in\U_q^{(\aleph_0)}$ does not end in $\U_q'$. Then it suffices to prove that the sequence $(d_i)$ must end in $X_A'$.

Suppose on the contrary that $(d_i)$ does not end in $X_A'$. Then  by (\ref{eq:42}) the word $0q$ or $10$ occurs infinitely many times in $(d_i)$.  Using the word substitution $0q\sim 10$ this implies that $x=((d_i))_q$ has a continuum of $q$-expansions, leading to a
contradiction with $x\in\U_q^{(\aleph_0)}$. 
\end{proof}
Furthermore,  we can prove that  $\U_q^{(\aleph_0)}$ is also countably infinite  for $q\in[2,q_c]$.
\begin{lemma}\label{lem:53}
Let $q\in[2,q_c]$. Then  $\U_q^{(\aleph_0)}$ is at most countable.
\end{lemma}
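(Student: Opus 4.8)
The plan is to mirror the proof of Lemma~\ref{lem:52}, but to replace the sub-shift $X_A'$ used there by the much thinner target $\set{((w1^\f))_q: w\in\set{0,1,q}^*}$, which is forced on us by the fact that throughout this range $\U_q'=\set{0^\f,q^\f}$ (Theorem~\ref{th:12}). First I would fix $x\in\U_q^{(\aleph_0)}$ and, exactly as in Lemma~\ref{lem:52}, choose a $q$-expansion $(d_i)$ of $x$ with $|\Sigma(x_n)|=\f$ for infinitely many $n$, so that the orbit $x_n=((d_{n+i}))_q$ lies in $S_q$ for infinitely many $n$. Here the two pieces of the switch region are the intervals $I_0:=\phi_0(E_q)\cap\phi_1(E_q)=[1/q,1/(q-1)]$ and $I_1:=\phi_1(E_q)\cap\phi_q(E_q)=[1,(2q-1)/(q(q-1))]$.

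The computation that pins everything to $q_c$ is the following. The digit-$q$ map $x\mapsto qx-q$ carries $I_1$ onto $[0,\beta]$ with $\beta=(3q-1-q^2)/(q-1)$, and a direct manipulation gives $\beta\ge 1/q\iff q^3-3q^2+2q-1\le 0\iff q\le q_c$. Thus for $q\le q_c$ this branch reaches back into $I_0$ rather than escaping into the forced-$0$ region $[0,1/q)$; consequently no branch issuing from $I_1$ can close off directly into the tail $0^\f$. Combined with the fact that the only unique tails available are $0^\f,q^\f$ (Theorem~\ref{th:12}), the only switch configuration admitting an infinite ``spine with break-offs into unique tails'' is the fixed point $x^*=1/(q-1)$ of the digit-$1$ map inside $I_0$, whose spine expansion is $1^\f$. (For $q>q_c$ one has $\beta<1/q$; then both branches from $I_1$ escape into forced regions, which is precisely the source of the two-expansion points of $\B_2=(q_c,\f)$ and of the extra, uncountable family that makes the present statement sharp on $(q_c,q^*)$.)

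With this in hand I would run the dichotomy. As in Lemma~\ref{lem:52}, if $0q$ or $10$ occurred infinitely often in $(d_i)$ then the substitution $0q\sim 10$ would yield a continuum of $q$-expansions, a contradiction; hence $(d_i)$ is eventually in $X_A'$. It remains to upgrade ``eventually in $X_A'$'' to ``eventually $1^\f$''. I would argue that the $X_A'$-tail must be eventually constant, and then, since $x_n\in S_q$ infinitely often excludes the constant tails $0^\f$ and $q^\f$, it must be $1^\f$. If instead the $X_A'$-tail were not eventually constant, its orbit would return to $S_q$ infinitely often away from the $x^*$-loop, and by the threshold of the previous paragraph none of these returns can terminate in a unique tail; the induced first-return map to $S_q$ would then behave like a full shift on two symbols, producing a full binary subtree of expansions and hence a continuum — again contradicting $x\in\U_q^{(\aleph_0)}$. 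Therefore $x=((w1^\f))_q$ for some finite word $w$, and since there are only countably many such $w$, the set $\U_q^{(\aleph_0)}$ is at most countable. At the endpoint $q=2$ the extra coincidences $0q^\f=1^\f=q0^\f$ of Lemma~\ref{lem:51} must be handled separately, but I would check they only permute sequences inside this same countable family.

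The step I expect to be the main obstacle is this last upgrade: converting the clean inequality $\beta\ge 1/q$ into the statement that, for a non-eventually-constant $X_A'$-tail, the recurring off-spine visits to $S_q$ \emph{never} close off into a unique tail, so that the return dynamics is genuinely two-sided. The difficulty is the bookkeeping of the forced runs \emph{between} consecutive switch visits: one must track where the orbit lands relative to $I_0$ and $I_1$ after the compelled digits and verify that no concatenation of such excursions can sneak into $0^\f$ or $q^\f$ unless the expansion is eventually $1^\f$. This orbit analysis, anchored by the defining cubic of $q_c$, is the technical heart of the argument.
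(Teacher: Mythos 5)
There is a genuine gap, and you have located it yourself: the ``upgrade'' from \emph{eventually in $X_A'$} to \emph{eventually $1^\f$} is never proved, and the claims offered in its support are false. Write $I_0=\phi_0(E_q)\cap\phi_1(E_q)=[1/q,1/(q-1)]$ and $I_1=\phi_1(E_q)\cap\phi_q(E_q)=[1,(2q-1)/(q(q-1))]$. Your assertion that for $q\le q_c$ ``no branch issuing from $I_1$ can close off directly into the tail $0^\f$'' fails already at the left endpoint of $I_1$: the point $1$ satisfies $\phi_q^{-1}(1)=0$, so its $q$-branch closes off into $0^\f$ (indeed $1=(q0^\f)_q$). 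Likewise $\phi_1^{-1}(1/q)=0$ at the left endpoint of $I_0$, and $\phi_1^{-1}\bigl((2q-1)/(q(q-1))\bigr)=q/(q-1)$ at the right endpoint of $I_1$ closes off into $q^\f$. So break-offs into the unique tails $0^\f, q^\f$ \emph{do} occur at switch points other than the fixed point $1/(q-1)$ of $\phi_1$, and excluding countable structures assembled from them (spines such as $1/q\to 1\to q-1\to\cdots$) is exactly the orbit analysis you postpone as ``the technical heart.'' The inequality $\beta\ge 1/q$ cannot do that work: it says that $\phi_q^{-1}(I_1)=[0,\beta]$ covers the \emph{entire} forced-$0$ region $[0,1/q)$ as well as part of $I_0$, so it in no way prevents branches from $I_1$ from landing in the forced-$0$ region. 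Hence the implication ``tail in $X_A'$ and not eventually constant $\Rightarrow$ the first-return map is a full shift $\Rightarrow$ continuum'' is unestablished, and with it the whole proof. Note also that your target statement (every element of $\U_q^{(\aleph_0)}$ equals $(w1^\f)_q$ for some finite word $w$) is a structure theorem far stronger than the countability actually asked for.

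The countability follows from a soft cardinality argument with no symbolic dynamics at all, and this is what the paper does. For $q\in[2,q_c]$, Theorems \ref{th:11} and \ref{th:12} give: if $x$ has finitely many expansions it has exactly one (since $\B_k\cap[2,q_c]=\emptyset$ for every finite $k\ge 2$), hence $x\in\U_q=\set{0,q/(q-1)}$, a two-point set. Now take $x\in\U_q^{(\aleph_0)}$. Along some expansion $(d_i)$ of $x$, some tail $((d_{n+i}))_q$ must have only finitely many expansions; otherwise every node of the expansion tree of $x$ would split below itself into two incomparable nodes, each of which splits again, and so on, producing a perfect subtree and therefore $2^{\aleph_0}$ expansions of $x$. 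That tail consequently lies in $\U_q$, whence
\[
\U_q^{(\aleph_0)}\subseteq\bigcup_{n=1}^\f\bigcup_{d_1\cdots d_n\in\set{0,1,q}^n}\phi_{d_1}\circ\cdots\circ\phi_{d_n}\left(\U_q\right),
\]
a countable union of two-point sets. The hypothesis $q\le q_c$ enters only through Theorems \ref{th:11} and \ref{th:12} (``finitely many implies unique'' and ``$\U_q$ is finite''), not through any dynamical threshold. This is also the step that replaces the one place where the argument of Lemma \ref{lem:52} genuinely breaks down in your range: that lemma needs $X_A'\setminus\U_q'$ to be at most countable, which is true for $q\ge q^*$ but badly false for $q\le q_c$, where $\U_q'=\set{0^\f,q^\f}$ while $X_A'$ is an uncountable subshift.
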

\begin{proof}
 Take $q\in[2, q_c]$. By Theorems \ref{th:11}  and \ref{th:12} it follows that any $x\in E_q$ with  $|\Sigma(x)|<\f$ must belong to $\U_q=\set{0,q/(q-1)}$.  Suppose $x\in\U_q^{(\aleph_0)}$. Then there exists a word $d_1\cdots d_n$ such that 
 \[
 \phi_{d_1}^{-1}\circ\cdots\circ\phi_{d_n}^{-1}(x)\in\U_q.
 \]
This implies that the set $\U_q^{(\aleph_0)}$ is at most countable, since
 \[
 \U_q^{(\aleph_0)}\subseteq\bigcup_{n=1}^\f\bigcup_{d_1\cdots d_n\in\set{0,1,q}^n}\phi_{d_1}\circ\cdots\circ\phi_{d_n}\left(\U_q\right).\qedhere
 \]
\end{proof}

When $q\in(q_c, q^*)$, one might expect that $\U_q^{(\aleph_0)}$ is also countably infinite. Unfortunately, we are not able to prove this. Instead,  we show that  the Hausdorff dimension of   $\U_q^{(\aleph_0)}$ is strictly smaller than $\dim_H E_q=1$.   
 \begin{lemma}\label{lem:54}
 For $q\in(q_c, q^*)$ we have  $ \dim_H\U_q^{(\aleph_0)}\le\dim_H\U_q<1$.
 \end{lemma}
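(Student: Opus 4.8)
The plan is to establish the two assertions separately: the comparison $\dim_H\U_q^{(\aleph_0)}\le\dim_H\U_q$ (which will in fact hold for every $q>q_c$), and then the strict bound $\dim_H\U_q<1$ (which is where $q<q^*$ enters).

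For the comparison I would proceed as in the covering behind Lemma \ref{lem:42}, the one new ingredient being a structural fact about the expansion tree. Fix $x\in\U_q^{(\aleph_0)}$ and let $T_x$ be the tree whose nodes are the admissible prefixes $d_1\cdots d_n$ of $q$-expansions of $x$, i.e.\ those with $y:=\phi_{d_n}^{-1}\circ\cdots\circ\phi_{d_1}^{-1}(x)\in E_q$, the children of such a node being its admissible one-letter extensions. Since $q>q_c>2$, the union in (\ref{eq:31}) is disjoint and the three cylinders overlap only inside $S_q$; hence a node has two children exactly when its tail lies in $S_q$, a point outside $S_q$ lies in a single cylinder, and every node extends to an infinite branch. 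The crux is a K\"onig-type dichotomy: in a finitely branching tree without dead ends, if every infinite branch meets a branching node infinitely often, then there are a continuum of branches (one embeds a full binary tree by recursively following two children at successive branching nodes). Since $|\Sigma(x)|=\aleph_0$, the contrapositive yields an expansion $(d_i)$ of $x$ and an index $N$ with $x_n=((d_{n+i}))_q\notin S_q$ for all $n\ge N$. As a point outside $S_q$ lies in a unique cylinder, the digits $d_{N+1},d_{N+2},\dots$ are then forced, so $(d_{N+i})\in\U_q'$ and $x_N\in\U_q$. Consequently
\[
\U_q^{(\aleph_0)}\subseteq\bigcup_{N=1}^\f\ \bigcup_{d_1\cdots d_N\in\set{0,1,q}^N}\phi_{d_1}\circ\cdots\circ\phi_{d_N}(\U_q),
\]
and as each map is a similarity and Hausdorff dimension is stable under countable unions, $\dim_H\U_q^{(\aleph_0)}\le\dim_H\U_q$.

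For the strict bound I would show $\U_q\subseteq X_A(q)$ and estimate by counting admissible words. By Lemma \ref{lem:22} every $(d_i)\in\U_q'\subseteq B_q$ satisfies $d_{n+1}\le1$ whenever $d_n=0$ and $d_{n+1}\ge1$ whenever $d_n=1$, so neither $0q$ nor $10$ --- the only forbidden transitions of $A$ in (\ref{eq:42}) --- occurs; thus $\U_q'\subseteq X_A'$. A direct computation shows the characteristic polynomial of $A$ is $x^3-3x^2+2x-1$, so its spectral radius is $q_c$ and the number of admissible words of length $n$ in $X_A'$ grows like $q_c^{\,n}$. Covering $X_A(q)$ by its $\asymp q_c^{\,n}$ generation-$n$ cylinders, each of diameter at most $\mathrm{diam}(E_q)/q^{\,n}$, gives $\mathcal{H}^s(X_A(q))=0$ for every $s>\log q_c/\log q$, whence
\[
\dim_H\U_q\le\dim_H X_A(q)\le\frac{\log q_c}{\log q}<1,
\]
the last step because $q>q_c$.

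The main obstacle is the structural claim in the first part: that a point with only countably many expansions admits at least one eventually univoque expansion. This hinges on the tree dichotomy, i.e.\ on excluding that every branch keeps returning to the switch region $S_q$; the disjointness of (\ref{eq:31}) for $q>2$ is precisely what turns \emph{tail avoids $S_q$} into \emph{tail is univoque} and makes the covering by affine copies of $\U_q$ available. The second part is, by comparison, a routine symbolic covering estimate once $\U_q'\subseteq X_A'$ is noted, and it is worth stressing that it delivers $\dim_H\U_q<1$ on all of $(q_c,q^*)$ even though Theorem \ref{th:12} does not pin down the exact value of $\dim_H\U_q$ there.
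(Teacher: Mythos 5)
Your proof is correct, and its skeleton coincides with the paper's: cover $\U_q^{(\aleph_0)}$ by the countable union $\bigcup_{N}\bigcup_{d_1\cdots d_N}\phi_{d_1}\circ\cdots\circ\phi_{d_N}(\U_q)$, then bound $\dim_H\U_q$ by $\dim_H X_A(q)$ for the subshift $X_A'$ of (\ref{eq:42}), whose spectral radius is $q_c$. The genuine difference is that you prove the two steps the paper only quotes. The covering inclusion appears in the paper prefaced by nothing more than ``Note that''; your K\"onig-type dichotomy --- in a finitely branching tree with no dead ends, if every branch met a branching node infinitely often one could embed a full binary tree, forcing $|\Sigma(x)|=2^{\aleph_0}$ --- is an actual proof of it, and it correctly isolates where $q>2$ enters: disjointness in (\ref{eq:31}) together with $\phi_0(E_q)\cap\phi_q(E_q)=\emptyset$ is what turns ``tail avoids $S_q$'' into ``tail is forced, hence in $\U_q'$''. (An equivalent shortcut: $\Sigma(x)$ is a countable closed subset of the compact space $\set{0,1,q}^\f$, hence has an isolated point, and an isolated expansion is eventually univoque.) For the strict bound, the paper invokes Mauldin--Williams graph-directed theory to get $\dim_H X_A(q)\le\log q_c/\log q$, whereas you note from Lemma \ref{lem:22} that $\U_q'\subseteq B_q$ forbids exactly the transitions $0q$ and $10$, check that the characteristic polynomial of $A$ is $x^3-3x^2+2x-1$ (it is), and run an explicit cylinder-counting cover; this is more elementary and self-contained, and delivers the same bound. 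Both substitutions are sound, so your write-up is in effect a rigorized version of the paper's proof rather than a different one.
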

 \begin{proof}
 Take $q\in(q_c, q^*)$. Note that 
 \[
 \U_q^{(\aleph_0)}\subseteq\bigcup_{n=1}^\f\bigcup_{d_1\cdots d_n\in\set{0,1,q}^n}\phi_{d_1}\circ\cdots\circ\phi_{d_n}(\U_q).
 \]
 By using the countable stability of Hausdorff dimension this implies that $\dim_H\U_q^{(\aleph_0)}\le\dim_H\U_q$. In the following it suffices to prove $\dim_H\U_q<1$.

 Note that $\U_q'\subseteq X_A'$, where $X_A'$ is the sub-shift of finite type over the state $\set{0,1, q}$ with adjacency matrix $A$ defined in (\ref{eq:42}). Then 
 \[
 \U_q\subseteq X_A(q)=\set{((d_i))_q: (d_i)\in X_A'}.
 \]
 Note that $X_A(q)$ is a graph-directed set (cf.~\cite{Mauldin_Williams_1988}). This implies that 
 \[
 \dim_H\U_q\le\dim_H X_A(q)\le \frac{\log q_c}{\log q}<1.\qedhere
 \]
  \end{proof}

At the end of this section  we investigate the set $\U_q^{(2^{\aleph_0})}$ which consists of all points having a continuum of $q$-expansions,  and show that $\U_q^{(2^{\aleph_0})}$ has full Hausdorff measure.
 \begin{lemma}\label{lem:55}
 For any  $q>1$ we have
 \[
 \mathcal{H}^{\dim_H E_q}(\U_q^{(2^{\aleph_0})})=\mathcal{H}^{\dim_H E_q}(E_q)\in(0,\f).
 \]
 \end{lemma}
 \begin{proof}
Clearly, for $q\in(1,q^*]$ we have $E_q=[0, q/(q-1)]$, and then $\mathcal H^{\dim_H E_q} (E_q)\in(0,\f)$.  Moreover, for $q>q^*$ we have by (\ref{eq:11}) that $\dim_H E_q=\log q^*/\log q$, and the set $E_q$ has positive and finite  Hausdorff measure (cf.~\cite{Ngai_Wang_2001}). Therefore,   
 \begin{equation}\label{eq:44}
0< \mathcal{H}^{\dim_H E_q}(E_q)<\f\quad\textrm{for any}\quad q>1.
 \end{equation}
  
 First we prove the lemma for $q\le q^*$.
By Theorems \ref{th:11} and  \ref{th:12} it follows that for any $q\in(1,q^*]$, 
\[
\dim_H\U_q^{(k)}=\dim_H\U_q<1=\dim_H E_q \quad\textrm{for any}\quad k\ge 2.
\]
Moreover, by Lemmas \ref{lem:52}--\ref{lem:54} we have 
$\dim_H\U_q^{(\aleph_0)}<1.$ Observe that 
 \begin{equation}\label{eq:45}
 E_q=\U_q^{(2^{\aleph_0})}\cup\U_q^{(\aleph_0)}\cup\bigcup_{k=1}^\f\U_q^{(k)}\quad\textrm{for any }q>1.
 \end{equation}
Therefore,  by (\ref{eq:44}) and (\ref{eq:45}) we have $\mathcal{H}^{\dim_H E_q}(\U_q^{(2^{\aleph_0})})=\mathcal{H}^{\dim_H E_q}(E_q)\in(0,\f).
$

Now   we consider  $q> q^*$. By Theorems \ref{th:12} (iii), \ref{th:13} and (\ref{eq:11}) it follows that 
\[
\dim_H\U_q^{(k)}=\frac{\log q_c}{\log q}<\frac{\log q^*}{\log q}=\dim_H E_q
\]
for any $k\ge 1$. Moreover, by Lemma \ref{lem:52} we have $\dim_H\U_q^{(\aleph_0)}=0$. Again,   by (\ref{eq:44}) and (\ref{eq:45}) it follows that $\mathcal{H}^{\dim_H E_q}(\U_q^{(2^{\aleph_0})})=\mathcal{H}^{\dim_H E_q}(E_q)\in(0,\f).
$ This completes the proof.
 \end{proof}

 \begin{proof}[Proof of Theorem \ref{th:14}]
 The theorem follows by Lemmas \ref{lem:51}--\ref{lem:53} and \ref{lem:55}.
 \end{proof}

\section{Examples and final remarks}\label{sec:remarks}
In this section we consider some examples. The first example is an application of Theorems \ref{th:11}--\ref{th:14} to expansions with deleted digits set. 
\begin{example}\label{ex:1}
Let $q=3$. We consider $q$-expansions with digits set $\set{0,1,3}$. This is a special case of expansions with deleted digits (cf.~\cite{Pollicott-Simon_1995}). Then
\[
E_3=\set{\sum_{i=1}^\f\frac{d_i}{3^i}: d_i\in\set{0,1,3}}.
\]
By Theorems \ref{th:12} and \ref{th:13} we have 
\[
\dim_H\U_3^{(k)}=\dim_H\U_3=\frac{\log q_c}{\log 3}\approx 0.767877
\]
for any $k\ge 2$. This means that the set $\U_3^{(k)}$ consisting of all points in  $E_3$ with precisely $k$ different triadic expansions has the same Hausdorff dimension $\log q_c/\log 3$ for any integer $k\ge 1$.
 Moreover, by Theorem \ref{th:14} it follows that $\U_3^{(\aleph_0)}$ is countably infinite, and
\[
\dim_H\U_3^{(2^{\aleph_0})}=\dim_H E_3=\frac{\log q^*}{\log 3}\approx 0.876036.
\]
\end{example}
Theorem \ref{th:12} gives a uniform formula for the Hausdorff dimension of $\U_q$ for $q\in[q^*, \f)$. Excluding the trivial case for $q\in(1, q_c]$ that $\U_q=\set{0, q/(q-1)}$, it would be interesting to ask whether the Hausdorff dimension of $\U_q$ can be determined for  $q\in(q_c, q^*)$.  
In the following we give an example for which the Hausdorff dimension of $\U_q$ can be explicitly calculated.
\begin{example}\label{ex:2}
Let $q=1+\sqrt{2}\in(q_c, q^*)$. Then 
\[
(q0^\f)_q=(1qq0^\f)_q\quad \textrm{and}\quad\alpha(q)=(q1)^\f.
\]
Moreover, the quasi-greedy $q$-expansion of $q-1$ with alphabet $\set{0, q-1, q}$ is $q(q-1)^\f$. Therefore, 
by Lemmas 3.1 and 3.2 of \cite{Zou_Lu_Li_2012} it follows that $\U_q'$ is the set of 
sequences $(d_i)\in\set{0,1,q}^\f$ satisfying
\[
\left\{
\begin{array}{lll}
d_{n+1}d_{n+2}\cdots\prec (1q)^\f& \textrm{if}& d_n=0,\\
1^\f<d_{n+1}d_{n+2}\cdots\prec (q1)^\f&\textrm{if}& d_n=1,\\
d_{n+1}d_{n+2}\cdots\succ 01^\f&\textrm{if}& d_n=q.
\end{array}
\right.
\]

Let $X_A'$ be the sub-shift of finite type over the states 
\[\set{00, 01, 11, 1q, q0, q1,qq}\]
with adjacency matrix 
\[
A=\left(\begin{array}{ccccccc}
1&1&0&0&0&0&0\\
0&0&1&1&0&0&0\\
0&0&1&1&0&0&0\\
0&0&0&0&1&1&0\\
0&1&0&0&0&0&0\\
0&0&1&1&0&0&0\\
0&0&0&0&1&1&1
\end{array}
\right).
\]
Then one can verify that $\U_q'\subseteq X_A'$, and  $X_A'\setminus\U_q'$ contains all sequences ending with $1^\f$ or $(1q)^\f$. This implies that
\[
\dim_H\U_q=\dim_H X_A(q),
\]
where $X_A(q)=\set{((d_i))_q: (d_i)\in X_A'}$. 
Note that $X_A(q)$ is a graph-directed set satisfying the open set condition (cf.~\cite{Mauldin_Williams_1988}). Then by Theorem \ref{th:13} we have
\[
\dim_H\U_q^{(k)}=\dim_H\U_q=\frac{h(X_A')}{\log q}\approx 0.691404.
\]

Furthermore, by the word substitution $q00\sim 1qq$ and in a similar way as in the proof of Lemma \ref{lem:52} one can show that $\U_q^{(\aleph_0)}$ is countably infinite.  
Finally, by Theorem \ref{th:14} we have $\dim_H\U_q^{(2^{\aleph_0})}=\dim_H E_q=1$.
\end{example}

\emph{Question 1}. Can we give a uniform formula for the Hausdorff dimension of $\U_q$ for $q\in(q_c, q^*)$? 

In beta expansions  we know that the dimension function of the univoque set has a Devil's staircase behavior (cf.~\cite{Komornik_Kong_Li_2015_1}). 

\emph{Question 2}. Does  the dimension function $D(q):=\dim_H\U_q$ have a Devil's staircase behavior in the interval $(q_c, q^*)$?

By Theorem \ref{th:14} one has that 
$\U_q^{(\aleph_0)}$ is countable for any $q\in\B_2\setminus(q_c, q^*)$. Moreover, in Lemma \ref{lem:54} we show that 
$\dim_H\U_q^{(\aleph_0)}\le\dim_H\U_q<1$
 for any $q\in(q_c, q^*)$. In view of Example \ref{ex:2} we ask the following 
  
 \emph{Question 3}. Does there exist a  $q\in(q_c, q^*)$ such that    $\U_q^{(\aleph_0)}$ has positive Hausdorff dimension?

\section*{Acknowledgements}
The second author was supported by NSFC No.~11701302 and K.C. Wong Magna Fund at Ningbo University. 
The third author  was supported by   NSFC No.~11401516 and Jiangsu Province Natural
Science Foundation for the Youth no BK20130433. The forth author  was
supported by  NSFC No.~11271137, 11671147 and  in part by Science and Technology Commission of Shanghai Municipality (No. 18dz2271000)


\end{document}